\documentclass[reqno]{amsart}

\title[Dual Transport Problem]{A Generalized Dual Maximizer for the Monge--Kantorovich Transport Problem}

\date{September 2010}

\author{Mathias Beiglb\"ock}
\address{University of Vienna. Faculty of Mathematics. Nordbergstrasse 15. 1090 Vienna, Austria}
\email{mathias.beiglboeck@univie.ac.at}
\author{Christian L\'eonard}
\address{Modal-X, Universit\'e Paris Ouest. B\^at. G, 200 av. de la R\'epublique. 92001 Nanterre, France}
\email{christian.leonard@u-paris10.fr}
\author{Walter Schachermayer}
\address{University of Vienna. Faculty of Mathematics. Nordbergstrasse 15. 1090 Vienna, Austria}
\email{walter.schachermayer@univie.ac.at}

\thanks{The first author acknowledges financial support from the Austrian Science
Fund (FWF) under grant P21209. The third author acknowledges
support from the Austrian Science Fund (FWF) under grant P19456,
from the Vienna Science and Technology Fund (WWTF) under grant
MA13 and by the Christian Doppler Research Association (CDG). All
authors thank A.~Pratelli for helpful discussions on the topic of
this paper. We also thank  M.~Goldstern and
G.~Maresch for their advice.}

 \keywords{Monge-Kantorovich problem, dual attainment, Kantorovich potential, optimal transport}
 \subjclass[2000]{}

\hoffset0cm \oddsidemargin1cm \evensidemargin1cm \textwidth14cm \topmargin0.4cm \headheight0.4cm \headsep0.2cm \textheight22cm \footskip1cm

\usepackage{amsmath,amssymb,amsthm}
\usepackage{bbm}
\usepackage{enumerate}

\renewcommand{\phi}{\varphi}

\newcommand{\pphi}{\hat \phi}
\newcommand{\ppsi}{\hat \psi}
\newcommand{\fop}{\phi\oplus\psi}
\newcommand{\fopp}{\pphi\oplus\ppsi}
\newcommand{\fopn}{\phi_n\oplus\psi_n}
\newcommand{\ppi}{\hat \pi}
\newcommand{\hh}{\hat{h}}

\newcommand{\XY}{{X\times Y}}

\newcommand\R{\mathbb{R}}
\newcommand\N{\mathbb{N}}
\newcommand\Z{\mathbb{Z}}
\newcommand\I{\mathbf{1}}
\newcommand\1{\mathbbm{1}}

\newcommand{\Prel}{P^{\mathrm{{rel}}}}

\newcommand{\eps}{\varepsilon}

\newtheorem{theorem}{Theorem}[section]

\newtheorem{proposition}[theorem]{Proposition}

\theoremstyle{definition}
\newtheorem{example}[theorem]{Example}

\begin{document}

\begin{abstract}
The dual attainment of the Monge--Kantorovich transport problem is
analyzed in a general setting. The spaces $X, Y$ are assumed to be
polish and equipped with Borel probability measures $\mu$ and
$\nu$. The transport cost function $c:\XY \to [0,\infty]$ is
assumed to be Borel measurable. We show that a dual optimizer
always exists, provided we interpret it as a projective limit of
certain finitely additive measures. Our methods are functional
analytic and rely on Fenchel's perturbation technique.
\end{abstract}

\maketitle
\section{Introduction}

We consider the \emph{Monge-Kantorovich transport problem} for
Borel probability measures  $\mu,\nu$ on polish spaces $X,Y$. See
\cite{Vill03,Vill09} for  an excellent  account of the theory of
optimal transportation. The set $\Pi(\mu,\nu)$ consists of all
Monge-Kantorovich \emph{transport plans}, that is,  Borel
probability measures on $\XY$ which have $X$-marginal $\mu$ and
$Y$-marginal $\nu$. The \emph{transport costs} associated to a
transport plan $\pi$ are given by
\begin{equation}\label{CostFunctional}
    \langle c,\pi\rangle =\int_{\XY}c(x,y)\,d\pi(x,y).
\end{equation}
In most applications of the theory of optimal transport, the cost
function $c:\XY\to [0,\infty]$ is lower semicontinuous and only
takes values in $\R_+.$ But equation (\ref{CostFunctional}) makes
perfect sense if the $[0,\infty]$-valued cost function only is
Borel measurable. We therefore assume throughout this paper that
$c:\XY\to [0,\infty]$ is a Borel measurable function which may
very well assume the value $+ \infty$ for ``many'' $(x,y) \in
\XY$. The subset $\{c=\infty\}$ of $\XY$ is a set of forbidden
transitions.

Optimal transport on the Wiener space
\cite{FeUs02,FeUs04a,FeUs04b,FeUs06}) and on configuration spaces
\cite{Dec08, DJS08} provide natural infinite dimensional settings
where $c$ takes infinite values.

\medskip

The (primal) Monge-Kantorovich problem is to determine the primal
value
\begin{equation}
P:=\inf\{ \langle c, \pi\rangle:\pi\in \Pi(\mu,\nu)\} 
\label{G1}
\end{equation}
and to identify a primal optimizer $\hat{\pi} \in \Pi(\mu,\nu)$
which is also called an \emph{optimal transport plan}. Clearly,
without loss of generality this minimization can be performed
among the \emph{finite transport plans}, i.e.\ the infimum is
taken over the plans $\pi\in\Pi(\mu,\nu)$ verifying $\langle c,
\pi\rangle<\infty.$

The dual Monge-Kantorovich problem  consists in determining

\begin{equation}\label{SimpleJ}
D:=\sup \left\{ \int_{X}\phi\,d\mu+\int_{Y}\psi\,d\nu  \right\}
\end{equation}
for $(\phi,\psi)$ varying over the set of pairs of functions
$\phi:X\to[-\infty,\infty)$ and $\psi:Y\to[-\infty,\infty)$ which
are \emph{integrable}, i.e.\ $\phi\in L^1(\mu),$ $\psi\in
L^1(\nu)$, and satisfy $\fop\leq c.$ We have denoted
$\fop(x,y):=\phi(x)+\psi(y),$ $x\in X,$ $y\in Y.$

We say that there is \emph{no duality gap} if the primal value $P$
of the problem equals the dual value $D,$ there is primal
attainment if there exists some optimal plan $\ppi$ and there is
\emph{integrable dual attainment} if the above dual
Monge-Kantorovich problem is attained for some $(\pphi,\ppsi)$.
There is a long line of research on these questions, initiated
already by Kantorovich (\cite{Kant42}) himself and continued by
numerous others (we mention
\cite{KaRu58,Dudl76,Dudl02,deAc82,GaRu81,Fern81,Szul82,RaRu95,RaRu96,Mika06,MiTh06},
see also the bibliographical notes in \cite[p 86, 87]{Vill09}).
Important progresses were done by Kellerer \cite{Kell84}. We also
refer to the seminal paper \cite{GaMc96} by Gangbo and McCann.
Recently the authors of the present article have obtained in
\cite{BLS09a} a general duality result which is recalled below at
Theorem \ref{res-01}.

It is well-known that there is primal attainment under the
assumptions that $c$ is lower semicontinuous and the primal value
$P$ is finite. On the other hand, it is easy to build examples
where $c$ is not lower semicontinuous and no primal minimizer
exists.

In this article we focus onto the question of the dual attainment.

The dual optimizers $(\pphi,\ppsi)$ are sometimes called
Kantorovich potentials. In the Euclidean case with a quadratic
cost, it is well-known that these potentials are convex conjugate
to each other and that any optimal plan is supported by the
subdifferential of $\pphi.$ In the general case, these potentials
are $c$-conjugate to each other, a notion introduced by
R\"uschendorf \cite{Rus96}.

Kellerer \cite[Theorem 2.21]{Kell84} established that integrable
dual attainment holds true in the case of bounded $c$. This was extended
by Ambrosio and Pratelli \cite[Theorem 3.2]{AP02}, who gave
appropriate moment conditions on $\mu$ and $\nu$ which are
sufficient to guarantee the existence of integrable dual
optimizers. Easy examples show that one cannot expect that the
dual problem admits integrable maximizers unless the cost function
satisfies certain integrability conditions with respect to $\mu$
and $\nu$ \cite[Examples 4.4, 4.5]{BeSc08}. In fact \cite[Example
4.5]{BeSc08} takes place in a very ``regular'' setting, where $c$
is squared Euclidean distance on $\R$. In this case there exist
natural candidates $(\pphi, \ppsi)$ for the dual optimizer which,
however, fail to be dual maximizers in the usual sense as they are
not integrable.

The following solution was proposed in \cite[Section 1.1]{BeSc08}.
If $\phi$ and $\psi$ are integrable functions and
$\pi\in\Pi(\mu,\nu)$ then
\begin{align}\label{G29}
\int_X \phi \,d\mu+\int_Y \psi\, d\nu=\int_{\XY} \fop\, d\pi .
\end{align}
If we drop the integrability condition on $\phi$ and $\psi$, the
left hand side need not make sense. But if we require that
$\fop\le c$ and if $\pi$ is a finite cost transport plan, i.e.\
$\int_{\XY} c\,d\pi <\infty$, then the right hand side of
(\ref{G29}) still makes good sense, assuming possibly the value
$-\infty$,  and we set
\begin{align*}
J_c(\phi,\psi)=\int_{\XY} \fop\, d\pi.
\end{align*}
It is not difficult to show (see \cite[Lemma 1.1]{BeSc08}) that
this value does not depend on the choice of the finite cost
transport plan $\pi$ and satisfies $J_c(\phi,\psi)\leq D$. Under
the assumption that there exists some finite transport plan, we
then say that we have \emph{ measurable dual attainment} in the
optimization problem \eqref{SimpleJ} if there exist Borel
measurable functions $\hat \phi:X\to [-\infty, \infty)$ and $ \hat
\psi: Y\to [-\infty, \infty)$ verifying $\fopp\leq c$ such that
\begin{align}\label{BetterJ}
&D=J_c(\pphi,\ppsi).
\end{align}

In \cite[Theorem 2]{BeSc08} it was shown that, for Borel
measurable $c:\XY\to [0,\infty]$ such that $c<\infty, \mu \otimes
\nu$-almost surely, there is no duality gap and there is
measurable dual attainment in the sense of \eqref{BetterJ}.

A necessary and sufficient condition for the measurable dual
attainment was proved in \cite[Theorems 1.2 and 3.5]{BLS09a}. We
need some more notation to state this result below as Theorem
\ref{res-01}. Fix $0\le\varepsilon\le 1$ and define
    $\Pi^{\eps}(\mu,\nu)=\{\pi\in\mathcal{M}_\XY^+, \|\pi\|\geq 1-\varepsilon , p_X(\pi)\le \mu, p_Y(\pi)\le \nu\}$
where $\mathcal{M}_\XY^+$ denotes the non-negative Borel measures
$\pi$ on $\XY$ with norm $\|\pi\| =\pi(\XY).$ By $p_X (\pi)\le\mu$
(resp. $p_Y (\pi)\le\nu$) we mean that the projection of $\pi$
onto $X$ (resp. onto $Y$) is dominated by $\mu$ (resp. $\nu$). We
denote
    $ 
    P^\varepsilon :=\inf\left\{\langle
c,\pi\rangle : \pi\in\Pi^{\eps}(\mu,\nu)\right\}.
    $ 
This partial transport problem has recently been studied by
Caffarelli and McCann \cite{CaMc06}  as well as Figalli
\cite{Figa09}. In their work the emphasis is on a finer analysis
of the Monge problem for the squared Euclidean distance on
$\mathbb{R}^n$, and pertains to a fixed $\varepsilon>0$. In the
present paper, we do not deal with these more subtle issues of the
Monge problem and always remain in the realm of the Kantorovich
problem (\ref{G1}).  We call
\begin{align}
\label{G8} \Prel := \lim\limits_{\varepsilon\to 0} P^\varepsilon
\end{align}
the relaxed primal value of the transport plan. Obviously this
limit exists (assuming possibly the value $+ ~\infty$) and
$\Prel\le P$.

\begin{theorem}[Measurable dual attainment \cite{BLS09a}]\label{res-01}
Let $X, Y$ be polish spaces, equipped with Borel probability
measures $\mu,\nu$, and let $c:\XY\to [0 ,\infty]$ be Borel
measurable.
\begin{enumerate}[(a)]
    \item There is no duality gap if the primal problem is defined in
the relaxed form (\ref{G8}) while the dual problem is formulated
in its usual form \eqref{SimpleJ}. In other words, we have $\Prel
=D.$
    \item Assume that in addition there exists a finite
transport plan $\pi\in\Pi(\mu,\nu)$. The following statements are
equivalent.
\begin{enumerate}
\item[(i)] There is measurable dual attainment, i.e.\ there exist
measurable functions $\pphi, \ppsi$ such that $\fopp\leq c$ and
$\Prel=D=J_c(\pphi,\ppsi)$. \item[(ii)] There exists a $\mu\otimes
\nu$-a.s.\ finite function $h:\XY \to [0,\infty]$ such that
$\Prel= P_{c\wedge h}:=\inf\{ \langle c\wedge h, \pi\rangle:\pi\in
\Pi(\mu,\nu)\}$.
\end{enumerate}
\end{enumerate}
\end{theorem}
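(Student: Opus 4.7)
Both parts rest on the earlier result \cite{BeSc08} (measurable dual attainment, and in particular $P=D$, whenever $c$ is $\mu\otimes\nu$-a.s.\ finite), combined with a relaxation/approximation argument for part (a).

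For part (a), the plan is to approximate $c$ from below by the bounded truncations $c_n := c \wedge n$. Each $c_n$ is bounded, so by \cite{BeSc08} one has $P_{c_n}=D_{c_n}$; moreover, any sub-marginal plan $\pi\in\Pi^{\varepsilon}(\mu,\nu)$ extends to a plan in $\Pi(\mu,\nu)$ by adding a rescaled product of the marginal defects, at extra cost at most $\|c_n\|_\infty\varepsilon$, whence $\Prel_{c_n}=P_{c_n}$. One then passes to the limit in $n$: $\Prel_{c_n}\nearrow \Prel_c$ by monotone convergence along a tight minimizing sequence of sub-marginal plans, while $D_{c_n}\nearrow D_c$, the delicate half, is obtained by constructing asymptotically optimal admissible dual pairs via a diagonal argument. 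A cleaner alternative is to run Fenchel--Rockafellar directly in the pairing $\langle C_b(X\times Y),\mathcal{M}(X\times Y)\rangle$, where the sub-marginal relaxation furnishes the lower semicontinuity that closes the gap in one shot.

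For part (b), the implication (ii) $\Rightarrow$ (i) is immediate once (a) is in hand. Given $h$, set $c':=c\wedge h$, which is $\mu\otimes\nu$-a.s.\ finite by hypothesis, and apply \cite{BeSc08} to $c'$ to produce a measurable dual optimizer $(\pphi,\ppsi)$ for $c'$ with $\pphi\oplus\ppsi\leq c'\leq c$ and $J_{c'}(\pphi,\ppsi)=P_{c'}=\Prel_c=D_c$ (the last two equalities by the hypothesis of (ii) and by part (a), respectively). Since every $c$-finite transport plan is automatically $c'$-finite, $J_c(\pphi,\ppsi)$ and $J_{c'}(\pphi,\ppsi)$ coincide when computed on such a plan, so $(\pphi,\ppsi)$ attains the dual for $c$.

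For the converse (i) $\Rightarrow$ (ii), let $(\pphi,\ppsi)$ be a measurable dual attainer. First observe that $\pphi$ must be $\mu$-a.s.\ finite and $\ppsi$ must be $\nu$-a.s.\ finite: otherwise integration of $\pphi\oplus\ppsi$ against the hypothesized finite plan $\tilde\pi$ (which has marginals $\mu$ and $\nu$) produces $-\infty$, contradicting $J_c(\pphi,\ppsi)=D\in\R$. Now set
\[
h(x,y):=|\pphi(x)|+|\ppsi(y)|+1,
\]
which is $\mu\otimes\nu$-a.s.\ finite. Then $\pphi\oplus\ppsi\leq c\wedge h=:c'$, so $(\pphi,\ppsi)$ is dual-admissible for $c'$; part (a) applied to $c'$ yields $\Prel_{c'}=D_{c'}\geq J_{c'}(\pphi,\ppsi)=J_c(\pphi,\ppsi)=\Prel_c$, while $\Prel_{c'}\leq \Prel_c$ is immediate from $c'\leq c$. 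Hence $\Prel_{c'}=\Prel_c$, and since $c'$ is a.s.\ finite, \cite{BeSc08} gives $P_{c'}=D_{c'}=\Prel_{c'}=\Prel_c$, which is (ii). The principal obstacle is thus the proof of (a) for general Borel $c$; once (a) is available, (b) reduces to bookkeeping between $c$, $c\wedge h$, and the common finite transport plans, with the auxiliary function $h$ read off directly from the dual optimizer itself.
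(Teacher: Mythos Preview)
This paper does not prove Theorem~\ref{res-01}; it is quoted from \cite{BLS09a} (see the sentence preceding the theorem and its caption). So there is no ``paper's own proof'' here to compare against. Nonetheless, let me assess your proposal on its merits and against what this paper tells us about the method of \cite{BLS09a}.

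Your treatment of part~(b) is essentially correct. Given part~(a) and the result of \cite{BeSc08} for $\mu\otimes\nu$-a.s.\ finite costs, both implications go through as you wrote them; the choice $h=|\pphi|+|\ppsi|+1$ for (i)$\Rightarrow$(ii) is exactly the natural one, and your chain $P_{c\wedge h}=D_{c\wedge h}=\Prel_{c\wedge h}=\Prel_c$ is sound.

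Part~(a), however, has a genuine gap. The truncation $c_n=c\wedge n$ does give $D_{c_n}=P_{c_n}=\Prel_{c_n}$ for each $n$ (your extension of a sub-marginal plan by the normalized product of the marginal defects is fine when $c_n$ is bounded). The problem is the passage to the limit. You need $\sup_n \Prel_{c_n}=\Prel_c$, and your ``monotone convergence along a tight minimizing sequence of sub-marginal plans'' tacitly uses $\liminf_n \langle c,\pi_n\rangle \ge \langle c,\pi\rangle$ along weak limits $\pi_n\rightharpoonup\pi$ --- i.e.\ lower semicontinuity of $c$, which is precisely what is \emph{not} assumed here. Without it the interchange of $\sup_n$ and $\inf_\pi$ is exactly the nontrivial content of the theorem, so the argument is circular. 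The same objection applies to your $C_b(X\times Y)/\mathcal{M}(X\times Y)$ Fenchel--Rockafellar alternative: in that pairing the perturbed value function is lower semicontinuous only when $c$ is, so ``the sub-marginal relaxation furnishes the lower semicontinuity'' is not justified for merely Borel $c$.

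What \cite{BLS09a} actually does (as this paper indicates in the introduction and mirrors in the proof of Theorem~\ref{LeonardDuality}) is a Fenchel perturbation argument, but \emph{not} in the $C_b/\mathcal{M}$ pairing: one works instead in an $L^1/L^\infty$-type pairing relative to suitable reference measures, where the boundedness of the perturbed value function near the base point can be obtained directly from $\int c\,d\pi_0<\infty$ rather than from any lower semicontinuity of $c$. That is the missing idea in your sketch of~(a).
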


The aim of the present paper is to go beyond the setting of this
theorem where the measurable dual attainment is realized. We are
going to discuss the existence of an optimizer of an extension of
the dual problem \eqref{SimpleJ}, without imposing any further
conditions on the Borel measurable cost function $c:\XY \to
[0,\infty]$.

In Theorem \ref{LeonardDuality} we take a somewhat unothodox view at the general optimization problem. We start with a transport plan $\pi_0\in \Pi(\mu,\nu)$ with finite cost, but which is \emph{not} supposed to be optimal. We then optimize over all the transport plans $\pi\in\Pi(\mu,\nu)$ such that the Radon-Nikodym derivative $\frac{d\pi}{d\pi_0}$ is bounded. In this setting we show that there is no duality gap and that there is a dual optimizer. However, this dual optimizer is not given by a pair of functions $(\phi\oplus\psi)\in L^1(\pi_0)$, but rather as a weak star limit of a sequence $(\phi_n\oplus\psi_n)_{n=1}^\infty\in L^1(\pi_0)$ in the bidual $L^1(\pi_0)^{**}$. A rather elaborate example in the accompanying paper \cite{BLS09b} shows that this passage to the bidual is indeed necessary, in general.

While Theorem \ref{LeonardDuality} depends on the choice of the finite transport plan $\pi_0\in \Pi(\mu,\nu)$, we formulate in Theorem \ref{GeneralDualityLimit} a result which does not depend on this choice. There we pass to a projective limit along a net of finite transport plans. Again we can prove that there is no duality gap and can identify a dual optimizer.

\section{Two types of accident}

In this section, we point out some difficulties which arise when
going one step beyond the measurable dual attainment. We shall
face two types of troubles which might be called
\begin{itemize}
    \item measurability accident;
    \item singular concentration accident.
\end{itemize}
Before describing these phenomena, it is worth recalling some
results from \cite{BeSc08} and \cite{Leo07b} about optimal plans.
The proofs of the present paper and of Theorems \ref{res-02} and
\ref{res-03} below rely on three different types of techniques.

\subsection*{About the optimal plans}

The following characterization of the optimal plans was proved in
\cite{BeSc08}.

\begin{theorem}[{\cite[Theorem 2]{BeSc08}}]\label{res-02}
Assume that $X, Y$ are polish spaces equipped with Borel
probability measures $\mu, \nu,$ that $c : \XY \to [0,\infty]$ is
Borel measurable and $\mu\otimes\nu$-a.e.\ finite and that there
exists a finite transport plan.
\begin{enumerate}[(a)]
    \item Let $\pi$ be a finite transport plan and assume that there exist
measurable functions $\phi:X\to[-\infty,\infty)$ and
$\psi:Y\to[-\infty,\infty)$  which satisfy
\begin{equation}\label{eq-01}
    \left\{\begin{array}{l}
       \phi\oplus \psi\leq c\quad \textrm{everywhere} \\
        \phi\oplus \psi= c\quad \pi\textrm{-almost everywhere.} \\
    \end{array}\right.
    \end{equation}
Then $J_c(\phi, \psi)=\langle c,\pi\rangle,$ thus $\pi$ is an
optimal transport plan and $\phi,\psi$ are dual maximizers in the
sense of \eqref{BetterJ}.
    \item Assume that $\ppi$ is an optimal transport plan. Then $\ppi$ verifies \eqref{eq-01} for every pair $(\pphi,\ppsi)$
of dual maximizers in the sense of \eqref{BetterJ}.
\end{enumerate}
\end{theorem}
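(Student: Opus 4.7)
The plan is to reduce everything to the identity $J_c(\phi,\psi)=\int_{\XY}\fop\,d\pi'$ which holds for any finite transport plan $\pi'$ (this is \cite[Lemma 1.1]{BeSc08}, quoted in the excerpt), together with the weak duality inequality $D\le P$ and the no-duality-gap result $P=D$ that is available under the $\mu\otimes\nu$-a.e.\ finiteness hypothesis on $c$.

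For part (a), I would first observe that since $\pi$ has finite cost and $\fop\le c$ everywhere, the positive part $(\fop)^+\le c$ is $\pi$-integrable, so $\int_{\XY}\fop\,d\pi$ is well defined with values in $[-\infty,\infty)$, and it equals $J_c(\phi,\psi)$ by the quoted lemma. The $\pi$-a.e.\ equality $\fop=c$ then upgrades this to $J_c(\phi,\psi)=\langle c,\pi\rangle$. Next, for any other finite transport plan $\pi'\in\Pi(\mu,\nu)$, the lemma gives $J_c(\phi,\psi)=\int_{\XY}\fop\,d\pi'\le\langle c,\pi'\rangle$, so $\langle c,\pi\rangle\le\langle c,\pi'\rangle$; this shows that $\pi$ is optimal and hence $P=\langle c,\pi\rangle$. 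Finally, weak duality $D\le P$ combined with the standing inequality $J_c(\phi,\psi)\le D$ yields $J_c(\phi,\psi)\le D\le P=\langle c,\pi\rangle=J_c(\phi,\psi)$, whence $D=J_c(\phi,\psi)$ and $(\phi,\psi)$ is a dual maximizer in the sense of \eqref{BetterJ}.

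For part (b), let $\ppi$ be an optimal transport plan and $(\pphi,\ppsi)$ any measurable pair with $\fopp\le c$ and $J_c(\pphi,\ppsi)=D$. Because $\ppi$ is optimal it has finite cost, so by the same lemma we have $J_c(\pphi,\ppsi)=\int_{\XY}\fopp\,d\ppi$. Under the a.e.\ finiteness hypothesis on $c$, \cite[Theorem 2]{BeSc08} (or part (a) applied to a pair produced there) guarantees $P=D$, so
\begin{equation*}
\int_{\XY}\fopp\,d\ppi=J_c(\pphi,\ppsi)=D=P=\langle c,\ppi\rangle=\int_{\XY}c\,d\ppi.
\end{equation*}
Since $\fopp\le c$ pointwise and both integrals are finite and equal, the integrand $c-\fopp\ge 0$ vanishes $\ppi$-almost surely, giving the second line of \eqref{eq-01}; the first line holds by assumption.

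There is no real obstacle here: everything reduces to the two key facts that $J_c$ is independent of the finite plan used to evaluate it and that no duality gap occurs under the a.e.\ finiteness assumption on $c$. The only mild subtlety is checking that the relevant integrals are unambiguously defined — which is handled by the bound $(\fop)^+\le c$ and the finite-cost hypothesis.
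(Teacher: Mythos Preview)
The paper does not prove this statement; Theorem~\ref{res-02} is quoted verbatim from \cite[Theorem~2]{BeSc08} and no proof is supplied here. So there is nothing in the present paper to compare against.

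That said, your argument is correct and is the standard one. Part~(a) follows cleanly from the two quoted facts about $J_c$ (independence of the finite-cost plan, and $J_c\le D$): the chain $J_c(\phi,\psi)=\langle c,\pi\rangle\ge P\ge D\ge J_c(\phi,\psi)$ forces equality throughout. For part~(b) the only extra ingredient is $P=D$, and you correctly identify this as a separate assertion of \cite[Theorem~2]{BeSc08} valid under the $\mu\otimes\nu$-a.e.\ finiteness hypothesis; once $P=D$ is granted, the equality $\int\fopp\,d\ppi=\langle c,\ppi\rangle$ combined with $\fopp\le c$ and $\langle c,\ppi\rangle<\infty$ gives $\fopp=c$, $\ppi$-a.e.

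One small quibble: your parenthetical ``or part~(a) applied to a pair produced there'' is not self-contained, since part~(a) needs a finite plan $\pi$ for which the pair already satisfies \eqref{eq-01}, and mere measurable dual attainment does not hand you such a $\pi$. Your primary route, invoking the no-duality-gap result directly, is the right one.
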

As a definition which was introduced in \cite{ScTe08}, a transport
plan $\pi$ is said to be \emph{strongly $c$-cyclically monotone}
if there exist measurable functions $\phi : X \to
[-\infty,\infty), \psi : Y \to [-\infty,\infty)$ which satisfy
\eqref{eq-01}.

Denote  by $\Pi(\mu,\nu,c)$ the set of finite cost transport plans
\[ \Pi(\mu,\nu,c) := \left\{ \pi\in\Pi(\mu,\nu)  : \int_{\XY} c\,d\pi <\infty \right\}, \]
and say that a property holds $\Pi(\mu,\nu,c)$-almost everywhere
if it holds true outside a measurable set $N$ such that
$\pi(N)=0,$ for all $\pi\in\Pi(\mu,\nu,c).$

In \cite{Leo07b}, the assumption that $c$ is $\mu\otimes\nu$-a.e.\
finite was removed under the extra requirement that $c$ is lower
semicontinuous and the following analogous results were obtained.

\begin{theorem}[\cite{Leo07b}]\label{res-03}
Assume that $X, Y$ are polish spaces equipped with Borel
probability measures $\mu, \nu,$ that $c : \XY \to [0,\infty]$ is
lower semicontinuous and that there exists a finite transport
plan.
\begin{enumerate}[(a)]
    \item Let $\pi$ be a finite plan and assume that there exist
measurable functions $\phi:X\to[-\infty,\infty)$ and
$\psi:Y\to[-\infty,\infty)$  which satisfy
\begin{equation}\label{eq-02}
    \left\{\begin{array}{l}
       \phi\oplus \psi\leq c\quad \Pi(\mu,\nu,c)\textrm{-almost everywhere} \\
        \phi\oplus \psi= c\quad \pi\textrm{-almost everywhere.} \\
    \end{array}\right.
    \end{equation}
Then $J_c(\phi, \psi)=\langle c,\pi\rangle,$ thus $\pi$ is an
optimal transport plan and $\phi,\psi$ are dual maximizers  in the
sense of \eqref{BetterJ}.

    \item Take any optimal plan $\ppi,$  $\epsilon>0$ and $\pi_o$ any
probability measure on $\XY$ such that $\int_{\XY}
c\,d\pi_o<\infty.$ Then, there exist functions $h\in
L^1(\ppi+\pi_o),$ $\phi$ and $\psi$ bounded continuous on $X$ and
$Y$ respectively and a measurable subset $Z_\epsilon\subset (\XY)$
such that
\begin{enumerate}[(i)]
    \item
    $h= c,\ \ppi$-almost
    everywhere on $(\XY)\setminus Z_\epsilon;$
    \item
    $\int_{Z_\epsilon} (1+c)\,d\ppi\le\epsilon;$
    \item
    $-c/\epsilon\le h\le c,\ (\ppi+\pi_o)$-almost everywhere;
    \item
    $-c/\epsilon\le \phi\oplus\psi\le c,$ everywhere;
    \item
    $\|h-\phi\oplus\psi\|_{L^1(\ppi+\pi_o)}\le\epsilon.$
\end{enumerate}
\end{enumerate}
\end{theorem}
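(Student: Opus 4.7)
My plan splits the argument according to the two parts of the theorem: part (a) is a minor extension of Theorem \ref{res-02}(a), while part (b) is a technical approximation exploiting the lower semicontinuity of $c$.

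For (a), the first task is to verify that $J_c(\phi,\psi)$ remains well-defined under the relaxed hypothesis that $\phi\oplus\psi\leq c$ only $\Pi(\mu,\nu,c)$-almost everywhere. Given any $\pi'\in\Pi(\mu,\nu,c)$, the mixture $(\pi+\pi')/2$ also lies in $\Pi(\mu,\nu,c)$, so $\phi\oplus\psi\leq c$ holds $\pi'$-a.s. Thus $(\phi\oplus\psi)^+$ is $\pi'$-integrable, the integral $\int(\phi\oplus\psi)\,d\pi'$ is well-defined in $[-\infty,\infty)$, and the splitting argument of \cite[Lemma 1.1]{BeSc08} shows that it is independent of $\pi'$; in particular $J_c(\phi,\psi)\leq\langle c,\pi'\rangle$ for every finite-cost plan, so $J_c(\phi,\psi)\leq P$. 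Specializing to $\pi'=\pi$ and using $\phi\oplus\psi=c$ $\pi$-a.e.\ gives $J_c(\phi,\psi)=\langle c,\pi\rangle$. Combined with $D=\Prel\leq P$ from Theorem \ref{res-01}(a) and the general bound $J_c\leq D$, equality propagates throughout the chain, yielding both the primal optimality of $\pi$ and the dual maximality of $(\phi,\psi)$.

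For (b), I would carry out a two-step approximation. The finiteness of $\int c\,d(\ppi+\pi_o)$ lets us choose a subset $A_\epsilon\subset\XY$ (say $\{c\leq M\}$ for $M$ large) with $\int_{\XY\setminus A_\epsilon}(1+c)\,d(\ppi+\pi_o)\leq\epsilon/2$; take $Z_\epsilon:=\XY\setminus A_\epsilon$. Define $h$ to equal $c$ on $A_\epsilon$ and extend it so that $-c/\epsilon\leq h\leq c$ pointwise on $Z_\epsilon$; this secures conditions (i)--(iii). On $A_\epsilon$, where $c$ is bounded, Kellerer's bounded-cost dual attainment \cite{Kell84}, combined with the representation $c=\sup_k c_k$ with bounded continuous $c_k\uparrow c$ (available since $c$ is lower semicontinuous), produces bounded continuous potentials $(\phi,\psi)$ with $\phi\oplus\psi\leq c$ and $\phi\oplus\psi$ close to $c$ in $L^1(\ppi+\pi_o)$ on $A_\epsilon$. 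Finally I would globalize via $c$-transforms to preserve the pointwise inequalities everywhere.

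The main obstacle will be maintaining both pointwise bounds $\phi\oplus\psi\leq c$ and $\phi\oplus\psi\geq -c/\epsilon$ globally, not merely on $A_\epsilon$, while keeping $\phi,\psi$ bounded continuous and $\phi\oplus\psi$ close to $h$ in $L^1(\ppi+\pi_o)$. Naive truncation destroys the upper inequality; the remedy is to apply a $c$-transform after truncation, redefining $\psi(y):=\inf_x(c(x,y)-\phi(x))$ and clipping at a large bound $A$, which automatically restores $\phi\oplus\psi\leq c$ while keeping $\psi$ bounded above. A symmetric construction yields the lower bound $-c/\epsilon\leq\phi\oplus\psi$, and a Lusin-type argument upgrades measurability to continuity where required.
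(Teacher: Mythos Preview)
The paper does not contain a proof of Theorem~\ref{res-03}; it is quoted from \cite{Leo07b} and the authors explicitly remark that ``the proofs of the present paper and of Theorems \ref{res-02} and \ref{res-03} below rely on three different types of techniques.'' There is therefore no in-paper proof to compare your proposal against.

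That said, a few remarks on the proposal itself. For part~(a) your chain of inequalities is essentially right, but the step ``$J_c\le D$'' is borrowed from the setting where $\phi\oplus\psi\le c$ holds everywhere, and it is not obvious that it survives when the inequality is only assumed $\Pi(\mu,\nu,c)$-a.e.; the pair $(\phi,\psi)$ need not be admissible for the supremum defining $D$. The clean route is the one you already half-wrote: from $\phi\oplus\psi\le c$ $\pi'$-a.e.\ for every $\pi'\in\Pi(\mu,\nu,c)$ you get directly $J_c(\phi,\psi)\le P$, and from $\phi\oplus\psi=c$ $\pi$-a.e.\ you get $J_c(\phi,\psi)=\langle c,\pi\rangle\ge P$, so $\pi$ is optimal and $J_c(\phi,\psi)=P$. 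To conclude $J_c(\phi,\psi)=D$ you then need $P=D$, which for lower semicontinuous $c$ is Kellerer's theorem \cite{Kell84}; invoking only $\Prel=D$ from Theorem~\ref{res-01}(a) is not enough unless you separately argue $\Prel=P$.

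For part~(b) your outline is plausible in spirit but the hard work is precisely where you flag it: simultaneously enforcing the two-sided pointwise bound $-c/\epsilon\le\phi\oplus\psi\le c$ \emph{everywhere} while keeping $\phi,\psi$ bounded \emph{continuous} and close to $h$ in $L^1(\ppi+\pi_o)$. A $c$-transform restores $\phi\oplus\psi\le c$ but in general destroys continuity (the infimum over $x$ of $c(x,y)-\phi(x)$ is only upper semicontinuous), and a subsequent Lusin step destroys the pointwise inequality again. The argument in \cite{Leo07b} proceeds instead through a saddle-point/Fenchel formulation that produces the approximating pair $(\phi,\psi)$ directly in $C_b(X)\times C_b(Y)$ with the required bounds, rather than by post-processing measurable optimizers; your constructive truncate-then-transform-then-Lusin loop, as written, does not close.
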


As regards (\textit{a}), the examples \cite[Example 5.1]{BGMS08}
and \cite[Example 4.2]{BeSc08} exhibit optimal plans which are not
strongly $c$-cyclically monotone but which satisfy the weaker
property \eqref{eq-02}. As regards (\textit{b}), let us emphasize
the appearance of the probability measure $\pi_o$ in items
(\textit{iii}) and (\textit{v}). One can read (\textit{iii-v}) as
an approximation of $\phi\oplus\psi\le c,$ $(\ppi+ \pi_o)$-a.e.
Since it is required that $\int_{\XY} c\,d\pi_o<\infty,$ one can
choose $\pi_o$ in $\Pi(\mu,\nu,c),$ and the properties
(\textit{i-v}) are an approximation of \eqref{eq-02} where
$\Pi(\mu,\nu,c)$-a.e.\! is replaced by the weaker
$(\ppi+\pi_o)$-a.e.

Note also that for any $(\phi,\psi)$ verifying \eqref{eq-01} or
\eqref{eq-02} with  $\pi\in\Pi(\mu,\nu,c)$,  we have
\begin{equation}\label{eq-03}
    \mu(\phi=-\infty)=\nu(\psi=-\infty)=0.
\end{equation}
As a consequence of this remark and a result of Kellerer
\cite{Kell84}, see \cite[Lemma A.1]{BLS09a}, we can
replace``$\fop\le c$ everywhere" in \eqref{eq-01} by ``$\fop\le
c,$ $\Pi(\mu,\nu)$-almost everywhere." The comparison between
\eqref{eq-01} and \eqref{eq-02} becomes clearer.

\subsection*{Measurability accident}
To develop a feeling for what we are after, we consider a specific
example.

\begin{example}[Ambrosio-Pratelli, {\cite[Example 3.2]{AP02}}]\label{SecondAP}
Let $X=Y= [0,1)$, equipped with Lebesgue measure
$\lambda=\mu=\nu$. Pick $\alpha\in [0,1)$ irrational. Set
$$\Gamma_0=\{(x,x):x\in X\}\quad\Gamma_1=\{(x,x\oplus\alpha):x\in
X\},$$ where $\oplus$ is addition modulo $1$. Define $c: \XY \to
[0,\infty]$ by
\begin{align*}
c(x,y)=\left\{
\begin{array}{cl}
1&\mbox{ for }(x,y)\in\Gamma_0\\
2&\mbox{ for }(x,y)\in\Gamma_1, x\in [0,1/2)\\
0&\mbox{ for }(x,y)\in\Gamma_1, x\in [1/2,1)\\
\infty&\mbox{ else }
\end{array}\right..
\end{align*}
This cost function is a variation on \cite{AP02}'s original
example which has been proposed in \cite[Example 4.3]{BeSc08}. For
$i=0,1$, let $\pi_i$ be the obvious transport plan supported by
$\Gamma_i$. Following the arguments of \cite{AP02}, it is
easy to see 
that all finite transport plans are
given by convex combinations of the form $\rho\pi_0+(1-\rho)\pi_1,
\rho\in[0,1]$ and each of these transport plans leads to costs of
$1$.
\\
Note that since $c$ is lower semicontinuous, there is no duality gap.
This was proved in \cite{Kell84} and is an easy consequence of
Theorem \ref{res-01}-(a). Thus, for each $\eps>0$, there are
integrable functions $\phi, \psi: [0,1)\to[-\infty,\infty)$ such
that $\fop\leq c$ and $0\le\int (c-\fop)\, d\pi_i\leq \eps$ for
$i=0,1$.
\\
On the other hand, it is shown in \cite{BeSc08} that there do not
exist \emph{measurable} functions $\phi, \psi:
[0,1)\to[-\infty,\infty)$ satisfying $\fop\leq c$ such that
$\fop=c$ holds $\pi_0$- as well as $\pi_1$-almost surely.
\end{example}

Let us have a closer look at the previous example: while it is
{\it not possible} to find Borel measurable limits
$\hat{\phi},\hat{\psi}$ of an optimizing sequence
$(\phi_n,\psi_n)^\infty_{n=1}$, it {\it is possible} to find a
limiting Borel function $\hh(x,y)$ of the sequence of functions
$(\phi_n(x)+\psi_n(y))^\infty_{n=1}$ on the set $\{(x,y)\in \XY
:c(x,y) <\infty\}$. Indeed, on this set, which simply equals
$\Gamma_0 \cup\Gamma_1$, any optimizing sequence
$(\phi_n(x)+\psi_n(y))^\infty_{n=1}$ for \eqref{SimpleJ} has a
subsequence which converges $\pi$-a.s.\ to $\hh(x,y):=c(x,y)$, for
any finite cost transport plan $\pi$.

Summing up: in the context of the previous example, there is a
Borel function $\hh(x,y)$ on $\XY$, which equals $c(x,y)$ on
$\Gamma_0\cup\Gamma_1$; it may take any value on $(\XY)\setminus
(\Gamma_0\cup\Gamma_1)$, e.g.\ the value $+\infty$. This function
$\hh(x,y)$ may be considered as a kind of dual optimizer: it is,
for any finite cost transport plan $\pi$, the limit of an
optimizing sequence $(\phi_n(x)+\psi_n(y))^\infty_{n=1}$ with
respect to the norm $\|\cdot\|_{L^1(\pi)}$.

\subsection*{Singular concentration accident}
One can rewrite the sufficient conditions of Theorems
\ref{res-02}-(a) and \ref{res-03}-(a) as follows: $\ppi$ and
$(\pphi,\ppsi)$ solve the primal and dual problems if
$\ppi\in\Pi(\mu,\nu,c),$ $(\fopp) \ppi=c\ppi$ and $(\fopp)\pi\le
c\pi,$ $\forall\pi\in\Pi(\mu,\nu,c),$ in the space of bounded
measures. In view of Example \ref{SecondAP} and of part (b) of
Theorem \ref{res-03}, we are aware that $\fopp$ should be replaced
by a jointly measurable $\hh$ such that for each
$\pi\in\Pi(\mu,\nu,c),$ $\hh\pi$ can be approximated in variation
norm by a sequence $((\phi_n\oplus\psi_n)\pi)^\infty_{n=1}$
verifying $(\phi_n\oplus\psi_n)\pi\le c\pi$ for all $n\ge1.$ But
this is not the end of the story.

In the accompanying paper \cite{BLS09b}, rather elaborate
extensions of the above example are analyzed. By means of examples
(which are too long to be recalled here), it is shown that instead
of the functions or, equivalently,  countably additive measures $\hh\pi,$ one has to consider
finitely additive measures.  
This might be seen as a consequence
of the limiting behavior of functions $\fop$ tending to $-\infty$
somewhere, under the seemingly contradictory requirement
\eqref{eq-03}.

\section{Existence of a dual optimizer}\label{ExistenceOptiSection}

The remainder of this article is devoted to developing a theory
which makes this circle of ideas precise in the general setting of
Borel measurable cost functions $c:\XY\to [0,\infty]$. To do so we
shall apply Fenchel's perturbation method as in \cite{BLS09a}. In
addition, we need some functional analytic machinery, in
particular we shall use the space $(L^1)^{**}=(L^\infty)^*$ of
finitely additive measures.

Assume $\Pi(\mu,\nu,c)\neq \emptyset$ to avoid the trivial case.

We fix $\pi_0\in\Pi(\mu,\nu,c)$ and stress that we do \emph{not}
assume that $\pi_0$ has minimal transport cost. In fact, there is
little reason in the present setting (where $c$ is not assumed to
be lower semicontinuous) why a primal optimizer $\widehat{\pi}$
should exist. We denote by $\Pi^{(\pi_0)}(\mu,\nu)$ the set of
elements $\pi\in\Pi(\mu,\nu)$ such that $\pi \ll \pi_0$ and
$\big\| \frac{d\pi}{d\pi_0} \big\|_{L^\infty(\pi_0)}<\infty$. Note
that $\Pi^{(\pi_0)}(\mu,\nu)=\Pi(\mu,\nu)\cap L^\infty(\pi_0)
\subseteq \Pi(\mu,\nu,c)$.

We shall replace the usual Kantorovich optimization problem over
the set $\Pi(\mu,\nu,c)$ by the optimization over the smaller set
$\Pi^{(\pi_0)}(\mu,\nu)$ and consider
\begin{align}
 \label{Walters7} P^{(\pi_0)}
 &= \inf \{ \langle c,\pi\rangle =\textstyle{\int} c\, d\pi  : \pi\in\Pi^{(\pi_0)}(\mu,\nu)\}.
\end{align}
As regards the dual problem, we define for $\eps>0$,
\begin{equation*}\begin{split}
    D^{(\pi_0,\eps)}
  = \sup\Big\{\int \phi\,d\mu+\int\psi\,d\nu:\ & \phi\in L^1(\mu),\psi\in L^1(\nu),\\
        & \int_{\XY} (\fop -c)_+\,d\pi_0\le\eps
        \Big\}\quad \textrm{and}
\end{split}
\end{equation*}
\begin{equation}\label{Walters7D}
    D^{(\pi_0)}=\lim_{\eps\to 0}D^{(\pi_0,\eps)}.
\end{equation}

Define the ``summing'' map $S$ by
\begin{align*}
S:  L^1(X,\mu) \times L^1(Y,\nu) &\to L^1 (\XY,\pi_0)\\
(\phi,\psi) &\mapsto \phi\oplus\psi
\end{align*}
and denote by $L_S^1(\XY,\pi_0)$ the $\|.\|_1$-closed linear
subspace of $L^1(\XY,\pi_0)$ spanned by $S(L^1(X,\mu)\times
L^1(Y,\nu))$. Clearly $L_S^1(\XY,\pi_0)$ is a Banach space under
the norm $\|.\|_1$ induced by $L^1(\XY,\pi_0)$.

We shall also need the bi-dual $L_S^1(\XY,\pi_0)^{**}$ which may
be identified with a subspace of $L^1(\XY,\pi_0)^{**}$. In
particular, an element $h\in L_S^1(\XY,\pi_0)^{**}$ can be
decomposed into $h=h^r+h^s,$ where $h^r\in L^1(\XY,\pi_0) $ is the
regular part of the finitely additive measure $h$ and $h^s $ its
purely singular part. Note that it may happen that $h\in
L_S^1(\XY,\pi_0)^{**}$ while $h^r\not\in L^1_S(\XY,\pi_0),$ and
therefore also $h^s\not\in L_S^1(\XY,\pi_0)^{**}.$

\begin{theorem}\label{LeonardDuality}
Let $c: \XY \to [0,\infty]$ be Borel measurable and let
$\pi_0\in\Pi(\mu,\nu,c)$ be a finite transport plan.
We have
\begin{align}\label{NiceEq}
P^{(\pi_0)}=D^{(\pi_0)}.
\end{align}
There is an element $\hh\in L_S^1(\XY,\pi_0)^{**}$ which verifies
the inequality\footnote{The inequality $\hh\leq c$ pertains to the
lattice order of $L^1(\XY)^{**}$, where we identify the
$\pi_0$-integrable function $c$ with an element of
$L^1(\XY,\pi_0)^{**}.$ If $\hh$ decomposes into $\hh=\hh^r+\hh^s$,
the inequality $\hh\leq c$ holds true if and only if $\hh^r(x,y)
\leq c(x,y)$, $\pi_0$-a.s.\ and $\hh^s\leq 0$ (compare the
discussion after \eqref{Walters8})} $\hh\leq c$ in the Banach
lattice $L^1(\XY,\pi_0)^{**}$ and
$$D^{(\pi_0)}=\langle \hh, \pi_0\rangle.$$
If $  \pi \in \Pi^{(\pi_0)}(\mu,\nu)$ (identifying $ \pi$ with
$\frac{d  \pi}{d\pi_0}$) satisfies $\int c\, d \pi \le P^{(\pi_0)}
+\alpha$ for some number $\alpha \geq 0$, then
\begin{equation}\label{eq-04}
    -\alpha\le\langle \hh^s,\pi\rangle \leq 0.
\end{equation}
In particular, if $  \pi$ is an optimizer of (\ref{Walters7}),
then $\hh^s $ vanishes on the set $\{\frac{d \pi}{d\pi_0}> 0\}$.
\\
In addition, we may find a sequence of elements
$(\phi_n,\psi_n)\in L^1(\mu)\times L^1(\nu)$  such that
\begin{align}
&\fopn\to \hh^r,\ \pi_0\mbox{-a.s.},\nonumber\\
&\|(\fopn-\hh^r)_+\|_{L_1(\pi_0)}\to 0\quad \textrm{and}\nonumber\\
& \lim_{\delta\to0}\sup_{A\subseteq \XY,\pi_0(A)<\delta}\lim_{n\to
\infty} -\langle (\fopn)\1_A,\pi_0\rangle =
\|\hh^s\|_{L_1(\pi_0)^{**}}. \label{Walters9}
\end{align}
\end{theorem}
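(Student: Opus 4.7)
The plan is to apply Fenchel's perturbation technique in the dual pairing $\langle L^\infty(\pi_0),L^\infty(\pi_0)^*\rangle$, where $L^\infty(\pi_0)^*=L^1(\pi_0)^{**}$ naturally houses the finitely additive measures needed to model the singular concentration accidents identified in Section~2. Concretely, I would introduce the concave value function
\[
\Phi(u):=\sup\Big\{\int\phi\,d\mu+\int\psi\,d\nu:\phi\in L^1(\mu),\,\psi\in L^1(\nu),\ \fop\le c+u,\ \pi_0\text{-a.s.}\Big\}
\]
for $u\in L^\infty(\pi_0)$. A routine computation identifies the concave conjugate as $\Phi^*(h)=-\langle c,h\rangle$ when $h\in L^\infty(\pi_0)^*_+$ has marginals $\mu,\nu$ (in the finitely additive sense), and $-\infty$ otherwise, so Fenchel--Moreau gives $\Phi^{**}(0)=P^{(\pi_0)}$. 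Because the $\eps$-relaxation in the definition of $D^{(\pi_0)}$ implements precisely the upper semicontinuous concave envelope of $\Phi$ at $0$, one also has $\Phi^{**}(0)=D^{(\pi_0)}$, which delivers the no-gap equality \eqref{NiceEq}.

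\smallskip

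To extract $\hh$ and derive \eqref{eq-04}, I would pick an optimizing sequence $(\phi_n,\psi_n)\in L^1(\mu)\times L^1(\nu)$ with $\int\phi_n\,d\mu+\int\psi_n\,d\nu\to D^{(\pi_0)}$ and $\|(\fopn-c)_+\|_{L^1(\pi_0)}\to 0$. Since $c\in L^1(\pi_0)$ dominates each $\fopn$ from above up to a vanishing correction, the sequence stays bounded in $L^1(\pi_0)^{**}$; Banach--Alaoglu then yields a weak$^*$-limit $\hh$, lying in $L_S^1(\pi_0)^{**}$ as a weak$^*$-limit of elements of $L_S^1(\pi_0)$. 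Passing the upper bound $\fopn\le c+o(1)$ to the limit gives $\hh\le c$ in the Banach lattice $L^1(\pi_0)^{**}$, while $\langle\hh,\pi_0\rangle=\lim\int\fopn\,d\pi_0=D^{(\pi_0)}$. For any $\pi\in\Pi^{(\pi_0)}(\mu,\nu)$ the marginal identity $\int\fopn\,d\pi=\int\phi_n\,d\mu+\int\psi_n\,d\nu=\int\fopn\,d\pi_0$, coupled with $d\pi/d\pi_0\in L^\infty(\pi_0)$, passes to the weak$^*$-limit, so $\langle\hh,\pi\rangle=\langle\hh,\pi_0\rangle=P^{(\pi_0)}$. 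The Yosida--Hewitt decomposition $\hh=\hh^r+\hh^s$ then translates $\hh\le c$ into $\hh^r\le c$ $\pi_0$-a.s.\ and $\hh^s\le 0$. For $\pi$ with $\int c\,d\pi\le P^{(\pi_0)}+\alpha$, the chain
\[
P^{(\pi_0)}=\int\hh^r\,d\pi+\langle\hh^s,\pi\rangle\le\int c\,d\pi+\langle\hh^s,\pi\rangle\le P^{(\pi_0)}+\alpha+\langle\hh^s,\pi\rangle
\]
gives $\langle\hh^s,\pi\rangle\ge-\alpha$, while $\langle\hh^s,\pi\rangle\le 0$ follows from $\hh^s\le 0$ tested against $\pi\ge 0$. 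Setting $\alpha=0$ forces $\hh^s$ to vanish on $\{d\pi/d\pi_0>0\}$.

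\smallskip

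For the approximating sequence \eqref{Walters9}, I would apply Mazur's lemma together with a diagonal extraction --- using that convex combinations of $\fopn$ remain of the split form $\phi_m\oplus\psi_m$ --- to produce a new optimizing sequence converging $\pi_0$-a.s.\ to the regular part $\hh^r$; the $L^1$-convergence of the positive parts is immediate from $\fopn\le c+o(1)$ and $\hh^r\le c$ $\pi_0$-a.s. The last identity of \eqref{Walters9} then expresses the purely finitely additive character of $\hh^s$: all the $L^1$-mass lost in passing from $\fopn$ to $\hh^r$ must escape to $\pi_0$-vanishing sets, which is the standard small-set concentration characterization of the norm of a purely singular element of $L^1(\pi_0)^{**}$. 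I expect the main obstacle to lie precisely in this last step: a generic weak$^*$-limit in $L^1(\pi_0)^{**}$ has no canonical a.s.\ representative, so the one-sided domination $\fopn\le c+o(1)$ has to be coupled carefully with a Koml\'os- or Mazur-type extraction that preserves the direct-sum structure of $\fopn$ and that is compatible with the Yosida--Hewitt splitting.
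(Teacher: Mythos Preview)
Your strategy is workable but differs from the paper's, and there are two concrete gaps.

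\textbf{Perturbation space.} You set $u\in L^\infty(\pi_0)$, yet the $\eps$-relaxation in $D^{(\pi_0)}$ is the $L^1$-condition $\|(\fop-c)_+\|_{L^1(\pi_0)}\le\eps$; the USC envelope of your $\Phi$ at $0$ corresponds instead to $\|(\fop-c)_+\|_\infty\to0$, so the identification ``$\Phi^{**}(0)=D^{(\pi_0)}$'' fails as stated. Moreover, with $u\in L^\infty$ the ``routine computation'' of $\Phi^*$ is obstructed: the inner constraint $u\ge\fop-c$ in $L^\infty$ may be infeasible when $\fop-c$ is not essentially bounded above. The fix is to perturb in $L^1(\pi_0)$; then the conjugate lives in $L^\infty(\pi_0)$, one indeed finds $\Phi^*(-\gamma)=-\langle c,\gamma\rangle$ exactly for $\gamma\in\Pi^{(\pi_0)}(\mu,\nu)$, and Fenchel--Moreau yields $D^{(\pi_0)}=\Phi^{**}(0)=P^{(\pi_0)}$. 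Your separate extraction of $\hh$ via Banach--Alaoglu from the bounded optimizing sequence then goes through.

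\textbf{Approximating sequence.} Mazur's lemma does not apply to weak$^*$ convergence in a bidual, and a nonzero singular part $\hh^s$ rules out weak convergence of $\fopn$ in $L^1(\pi_0)$, so neither Mazur nor Koml\'os gives what is needed. The paper uses Goldstine's theorem instead: since the unit ball of $L_S^1(\pi_0)$ is $\sigma^*$-dense in that of $L_S^1(\pi_0)^{**}$, there is a net $f_\alpha\in L^1(\pi_0)$ with $\|f_\alpha\|_1\le\|\hh^s\|$, $\hh^r+f_\alpha\in L_S^1(\pi_0)$, and $\hh^r+f_\alpha\to\hh$ in $\sigma^*$. The \emph{pure singularity} of $\hh^s$ then allows one to select a sequence $f_{\alpha_n}$ with $\int f_{\alpha_n}\,d\pi_0\to-\|\hh^s\|$ and $\int(|f_{\alpha_n}|\wedge 2^n)\,d\pi_0\le 2^{-n}$, forcing $f_{\alpha_n}\to0$ $\pi_0$-a.s.; finally $\hh^r+f_{\alpha_n}$ is approximated in $L^1(\pi_0)$-norm by genuine sums $\phi_n\oplus\psi_n$.

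\textbf{Comparison.} The paper does not perturb the dual constraint at all. It perturbs the \emph{primal} marginal constraint $T_2(\gamma)=\I$, and crucially does so in the quotient space $L_S^1(\pi_0)^*$ (with its stronger norm) rather than in $L^\infty(\mu)\times L^\infty(\nu)$. Because $T_2:L^\infty(\pi_0)\to L_S^1(\pi_0)^*$ is a quotient map, the value function $p\mapsto\inf\{\langle c,\gamma\rangle:\gamma\in L^\infty_+(\pi_0),\ T_2(\gamma)=p\}$ is bounded on a ball around $\I$, hence continuous there, and a single Hahn--Banach subgradient $f\in L_S^1(\pi_0)^{**}$ yields $\hh=T_2^*(f)$ with $\hh\le c$ and $\langle\hh,\pi\rangle=P^{(\pi_0)}$ for every $\pi\in\Pi^{(\pi_0)}(\mu,\nu)$ simultaneously. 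Your route --- Fenchel--Moreau for no-gap, then Banach--Alaoglu for the optimizer --- is a legitimate alternative once the perturbation space is corrected, but the paper's choice of $L_S^1(\pi_0)^*$ is precisely what makes no-gap \emph{and} attainment fall out of one subgradient.
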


\begin{proof}
It is straightforward to verify the trivial duality relation
$D^{(\pi_0)}\leq P^{(\pi_0)}.$ To show the reverse inequality  and
to find the dual optimizer $\hh\in L^1(\XY,\pi_0)^{**}$, as in
\cite{BLS09a} we apply W.~Fenchel's perturbation argument. (For an
elementary treatment, compare also \cite{BLS09b}.) The summing map
$S$ factors through $L_S^1(\pi_0)$ as indicated in the subsequent
diagram:
\begin{eqnarray*}
L^1(\mu) \times L^1(\nu) &\stackrel{S}{\longrightarrow}& L^1 (\pi_0)
\\ \stackrel{~~~S_1}{\searrow} & & \stackrel{S_2~~~}{\nearrow}
\\ & L_S^1(\pi_0) &
\end{eqnarray*}
Then $S_1$ has dense range and $S_2$ is an isometric
embedding.  Denote by $\big( L_S^1(\pi_0)^\ast,
\|.\|_{L_S^1(\pi_0)^\ast} \big)$ the dual of $L_S^1(\pi_0)$ which
is a quotient space of $L^\infty(\pi_0)$. Transposing the above
diagram we get
\begin{eqnarray*}
L^\infty(\mu) \times L^\infty(\nu) &\stackrel{T}{\longleftarrow}& L^\infty (\pi_0)
\\ \stackrel{~~~T_1}{\nwarrow} & & \stackrel{T_2~~~}{\swarrow}
\\ & L_S^1(\pi_0)^\ast &
\end{eqnarray*}
where $T,T_1, T_2$ are the transposed maps of $S,S_1,$ resp.\
$S_2$. Clearly $T(\gamma) = (p_X(\gamma), p_Y(\gamma))$ for
$\gamma\in L^\infty(\pi_0)$, where $p_X, p_Y$ are the projections
of a measure $\gamma$ (identified with the
Radon-Nikodym-derivative $\frac{d\gamma}{d\pi_0}$) onto its
marginals. By elementary duality relations we have that $T_2$ is a
quotient map and $T_1$ is injective; the latter fact
allows us to identify the space $L_S^1(\pi_0)^\ast$ with a subspace of $L^\infty(\mu)\times L^\infty(\nu)$. \\
For example, consider the element $\I\in L^\infty(\pi_0)$, which
corresponds to the measure $\pi_0$ on $\XY$. The element $T_2(\I)
\in L_S^1(\pi_0)^\ast$ may then be identified with the element
$(\I,\I)=T(\I)$ in $L^\infty(\mu) \times L^\infty(\nu)$ which
corresponds to the pair $(\mu,\nu)$. We take the liberty to
henceforth denote this element simply by $\I$, independently of
whether we consider  it as an element of $L^\infty(\pi_0)$,
$L_S^1(\pi_0)^\ast$ or $L^\infty(\mu)\times L^\infty(\nu)$.

We may now rephrase the primal problem (\ref{Walters7}) as
\[
\langle c,\gamma\rangle  = \int_{\XY} c(x,y) \, d \gamma(x,y) \to
\min, \quad \gamma\in L_+^\infty(\pi_0),
\]
under the constraint
\begin{equation}
T(\gamma)=\I. \label{Sandra9}
\end{equation}
The decisive trick is to replace (\ref{Sandra9}) by the trivially
equivalent constraint
\[
T_2(\gamma)=\I,
\]
and to perform the Fenchel perturbation argument \emph{not} in the
space $L^\infty(\mu) \times L^\infty(\nu)$ but rather in the
subspace $L_S^1(\pi_0)^\ast$ which is endowed with a
\emph{stronger norm}. The map $\Phi$: $L_S^1(\pi_0)^\ast \to
[0,\infty]$,
\[
\Phi(p) := \inf \{ \langle c,\gamma\rangle  : \gamma \in
L_+^\infty(\pi_0), T_2(\gamma)=p \}, \quad p\in L^1_S(\pi_0)^*,
\]
is  convex, positively homogeneous and $\Phi(\I)=P^{(\pi_0)}$.

\smallskip
\noindent\textbf{Claim.}\ \textit{There is a neighbourhood $V$ of
$\I$ in $L_S^1(\pi_0)^\ast$ on which $\Phi$ is bounded. }
\\
Indeed, let $U=\big\{ \gamma \in L^\infty(\pi_0) \mid
\|\gamma-\I\|_{L^\infty(\pi_0)} <\frac{1}{2} \big\}$. Then $U$ is
contained in the positive orthant $L_+^\infty(\pi_0)$ of
$L^\infty(\pi_0)$ and
\begin{align*}
\Phi(T_2(\gamma))\leq \langle c,\gamma\rangle  \leq \tfrac{3}{2}
\|c\|_{L^1(\pi_0)} \mbox{ for all }\gamma\in U.
\end{align*}
 Hence on $T_2(U)$, which simply is the open ball of radius $\frac{1}{2}$ around $\I$ in the Banach space
$L_S^1(\pi_0)^\ast$, we have that $\Phi$ is bounded by $\frac{3}{2}\|c\|_{L^1(\pi_0)}$.

\medskip

It follows from elementary geometric facts that the convex
function $\Phi$ is continuous on $T_2 (U)$ with respect to the
norm of $L^1_S(\pi_0)^*.$ By Hahn-Banach there exists $f\in
L^1_S(\pi_0)^{\ast\ast}$ such that
\begin{align*}
\langle f, \I\rangle & = \Phi(\I),\\
\langle f, p\rangle & \leq \Phi (p) \mbox { for all } p\in L^1_S(\pi_0)^{\ast}.
\end{align*}

The adjoint $T^*_2$ of $T_2$ maps $L_S^1(\pi_0)^{\ast\ast}$
isometrically onto a subspace $E$ of $L^1(\pi_0)^{\ast\ast}=
L^\infty(\pi_0)^\ast$. The space $E$ consists of those elements of
$L^1(\pi_0)^{\ast\ast}$ which are $\sigma^\ast$-limits of nets
$(\phi_\alpha\oplus\psi_\alpha)_{\alpha\in I}$ with $\phi_\alpha
\in L^1(\mu)$, $\psi_\alpha\in L^1(\nu)$. Write $\hh:=T_2^*(f)$.
Then for all $ \gamma\in L_+^\infty(\pi_0)$,
\begin{align}\label{TrivialPart}
\langle \hh, \gamma\rangle= \langle T_2^*(f),\gamma\rangle
=\langle f, T_2(\gamma)\rangle\ \leq\ \Phi(T_2(\gamma))\ \leq\
\langle c,\gamma\rangle,
\end{align}
and if $\pi\in L_+^\infty(\pi_0), T_2(\pi)=\I$ then
\begin{align}\label{AttainmentPart}
\langle \hh,\pi\rangle= \langle T_2^*(f),\pi\rangle =\langle f,
T_2(\pi)\rangle=\langle f, \I \rangle =\Phi(\I)=P^{(\pi_0)}.
\end{align}
By (\ref{TrivialPart}), the inequality $\hh\leq c$ holds true in
the Banach-lattice $L^\infty(\pi_0)^*$. Combining this with
(\ref{AttainmentPart}) we obtain that $\hh$ is a dual optimizer in
the sense of
\begin{equation}\label{Walters8}
\begin{split}
   D_{**}^{(\pi_0)}:= \sup \big\{ \langle g,\pi_0\rangle:\ & g\in L_S^1
(\pi_0)^{**},\,  g\leq c\\
        & \mbox{ in the Banach lattice } L^1(\pi_0)^{**} \big\}
\end{split}
\end{equation}
(where we identify $\pi_0$ with the element $\I$ of $L^\infty(\pi_0)$)
and that there is no duality gap in this sense, i.e.\
$
    D_{**}^{(\pi_0)}=P^{(\pi_0)}.
$

As mentioned above, every element $g \in L^\infty(\pi_0)^*$ splits
in a regular part $g^r$ lying in $L^1(\pi_0)$ and a purely
singular part $g^s.$ Given $g_1,g_2\in L^\infty(\pi_0)^*$, we have
$g_1\leq g_2$ if and only if $g_1^r\leq g_2^r$ and $g_1^s\leq
g_2^s$. Since $c\in L^1(\pi_0)$ we have $c^s=0$. The inequality
$\hh\leq c$ implies that $\hh^s\leq c^s=0$ and $\hh^r\leq c^r=c$.
It follows that for each $\pi\in L_+^\infty(\pi_0)$
\begin{align}\label{LessThan}
\langle \hh^r, \pi\rangle \leq \langle c,\pi\rangle.
\end{align}
Assume additionally that $\pi$ satisfies $T_2(\pi)=\I$ and choose
$\alpha\geq 0$ such that $\langle c, \pi \rangle \leq
P^{(\pi_0)}+\alpha$. Then $\langle \hh, \pi\rangle=P^{(\pi_0)}$
and subtracting this quantity from (\ref{LessThan}) we get
\begin{align*}
\langle -\hh^s, \pi\rangle = \langle \hh^r-\hh, \pi\rangle\leq
\langle c,\pi\rangle - P^{(\pi_0)}\leq \alpha
\end{align*}
showing \eqref{eq-04}.

We still have to show the existence of a \emph{sequence}
$(\phi_n,\psi_n)_{n=1}^\infty$ satisfying the above assertions
about convergence. So far we know that there is a net
$(\phi_\alpha, \psi_\alpha)_{\alpha\in I}$ such that
$\phi_\alpha\oplus \psi_\alpha$ weak-star converges to $\hh$.
First we claim that there exists a net $(f_\alpha)_{\alpha\in I}$
of elements of $L^1(\pi_0)$, such that $\|f_\alpha\|_1\leq \|
\hh^s\|$, $\hh^r+f_\alpha \in L_S^1(\pi_0)$ and $\hh^r+f_\alpha\to
\hh$ in the $\sigma^*$-topology. To see this, note that Alaoglu's
theorem \cite[Theorem IV.21]{RS1} implies that in a Banach space
$V$, the unit ball $B_1(V)$ is $\sigma^*$-dense in the unit ball
$B_1(V^{**})$ of the bidual. Thus $\hh^r+ \|\hh^s\|
B_1(L_S^1(\pi_0))$ is $\sigma^*$-dense in $\hh^r+ \|\hh^s\|
B_1(L_S^1(\pi_0)^{**})$ which yields the existence of a net
$(f_\alpha)_{\alpha\in I}$ as
required. 

\medskip
As $\hh^s$ is purely singular, we may find a sequence
$(\alpha_n)^\infty_{n=1}$ in $I$ such that 
$\|f_{\alpha_n}\|\leq \|\hh^s\|$ and  $ \int f_{\alpha_n}\,
d\pi_0=-\|\hh^s\|+2^{-n}$, and that $\int(|f_{\alpha_n}|\wedge
2^n)\,d\pi_0\le 2^{-n}$, which implies that the sequence
$(f_{\alpha_n})^\infty_{n=1}$ converges $\pi_0$-a.s.\ to zero.

As $\hh^r +f_{\alpha_n} \in L^1_S(\pi_0)$ we may find
$(\phi_n,\psi_n)\in L^1(\mu)\times L^1(\nu)$ such that
$$
\|\phi_n\oplus\psi_n -(\hh^r +f_{\alpha_n})\|_{L^1(\pi_0)}
<2^{-n}.
$$
We then have that $(\fopn)^\infty_{n=1}$ converges $\pi_0$-a.s.\
to $\hh^r$ and that $\|(\fopn-\hh^r)_+\|_{L^1(\pi_0)} \to 0$.

As regards assertion (\ref{Walters9}) 
we note that, for $A_m=\bigcup^\infty_{n=m+1}\{|f_{\alpha_n}|
>2^{-n}\}$ we have $\pi_0 (A_m)\le 2^{-m}$ and
\begin{align*}
\liminf\limits_{n\to \infty} (-\langle
(\phi_n\oplus\psi_n)\1_{A_m},\pi_0\rangle)
&  = -\limsup\limits_{n\to\infty} \langle(\hh^r+f_{\alpha_n}) \1_{A_m},\pi_0\rangle \\
&  = -\langle \hh^r\1_{A_m},\pi_0\rangle -\lim\limits_{n\to\infty} \langle f_{\alpha_n}\1_{A_m},\pi_0\rangle  \\
&  = -\langle \hh^r\1_{A_m},\pi_0\rangle +\|\hh^s\|_{L^1(\pi_0)}
{**}.
 \end{align*}
Letting $m$ tend to infinity we obtain that the left hand side of
(\ref{Walters9}) is greater than or equal to the right hand side.
As regards the reverse inequality it suffices to note that
$\|f_{\alpha_n}\|_{L^1(\pi_0)} \le \|\hh^s\|_{L^1(\pi_0)^{**}}$.

As $\hh^r\leq c, \pi_0$-a.s., we obtain in particular that
$\|(\phi_n\oplus \psi_n-c)_+\|_{L^1(\pi_0)}\to 0$ showing that
$D^{(\pi_0)}\geq P^{(\pi_0)}$ and therefore \eqref{NiceEq}, the
reverse inequality being straightforward.
\end{proof}

As a by-product of this proof, we have shown in \eqref{Walters8}
that
\begin{equation}\label{eq-05}
    D_{**}^{(\pi_0)}=D^{(\pi_0)}=P^{(\pi_0)}.
\end{equation}

Admittedly, Theorem \ref{LeonardDuality} is rather abstract.
However, we believe that it may be useful in applications to have
the possibility to pass to {\it some kind of limit $\hh$} of an
optimizing sequence $(\phi_n,\psi_n)^\infty_{n=1}$ in the dual
optimization problem, even if this limit is somewhat awkward. To
develop some intuition for the message of Theorem
\ref{LeonardDuality}, we shall illustrate the situation at the
hand of some examples.

Let us start with Example \ref{SecondAP}. In this case we may
apply Theorem \ref{LeonardDuality} to the finite transport plan
$\pi_{\frac{1}{2}} =\frac{1}{2} (\pi_0 +\pi_1)$, (we apologize for
using $\pi_{\frac{1}{2}}$ instead of $\pi_0$ in Theorem
\ref{LeonardDuality} as the notation $\pi_0$ is already taken). As
we have seen above, there are sequences
$(\phi_n\oplus\psi_n)^\infty_{n=1}$ converging
$\pi_{\frac{1}{2}}$-a.s.\ as well as in the norm of
$L^1(\pi_{\frac{1}{2}})$ to $\hh=c$, as defined in Example
\ref{SecondAP} above. In particular we do not have to bother about
the singular part $\hh^s$ of $\hh$, as we have $\hh=\hh^r$ in this
example. We find again that $h$ represents the limit of
$(\phi_n\oplus \psi_n)^\infty_{n=1}$, considered as a Borel
function on $\{c <\infty\}$ which is the support of
$\pi_{\frac12}.$

\medskip

We now make the example a bit more interesting and challenging. (See Example \ref{Example3.3} below.)

Fix in the context of Example \ref{SecondAP} (where we now write
$\tilde{c}$ instead of $c$ to keep the letter $c$ free for a new
function to be constructed) a sequence
$(\phi_n,\psi_n)^\infty_{n=1}$ such that $\|\tilde{c}-
\phi_n\oplus\psi_n\|_{L^1(\pi_i)}\to 0$ for $i=0,1$. We claim that
$(\phi_n\oplus\psi_n)^\infty_{n=1}$ converges in
$\|.\|_{L^1({\pi_k})}$ where, for each $k\in\N$, $\pi_k$ is the
measure which is uniformly distributed on
\begin{align}
\label{G37}
\Gamma_k=\{(x,x\oplus k\alpha): x\in [0,1)\}.
\end{align}
Let us prove  this convergence whose precise statement is given
below at \eqref{DefinitionOfD} and \eqref{ConcreteBound}. We know
that\footnote{The equations (\ref{ToCEq}) to
(\ref{TheLastEquation}) refer to integrable functions on $[0,1)$
and convergence is understood to be with respect to
$\|.\|_{L^1(\mu)}$.}
\begin{align}
    &\phi_n(x)+\psi_n(x) \quad \to  \quad \tilde c(x,x) \mbox { and} \label{ToCEq} \\
    &\phi_n(x)+\psi_n(x\oplus \alpha) \quad \to \quad \tilde c(x,x\oplus \alpha), \mbox{whence }\nonumber \\
    &\psi_n(x\oplus \alpha)-\psi_n(x) \quad \to \quad\underbrace{\tilde c(x,x\oplus \alpha)-\tilde c(x,x)}_{=:g(x)}=\left\{
        \begin{array}{ll}+ 1& \mbox{ for $x\in [0,\tfrac12)$},\\
                        -1& \mbox{ for $x\in [\tfrac12,1)$}.
        \end{array}\right. \label{DifferenceEquation}
\end{align}
Replacing $x$ by $x\oplus i\alpha, \ i=1,\ldots, k-1$ in (\ref{DifferenceEquation}) this yields
\begin{align*}
\psi_n(x\oplus \alpha)-\psi_n(x)\to\sum_{i=0}^{k-1} g(x\oplus i
\alpha).
\end{align*}
Combined with (\ref{ToCEq}) we have
\begin{align}\label{TheRelevantConvergence}
&\quad\ \lim_{n\to\infty} [\phi_n(x)+\psi_n(x\oplus k\alpha)] =  1+ \sum_{i=0}^{k-1} g(x\oplus i \alpha)\\
   &=\ {1+\ \#\left\{0\leq i<k: x\oplus i\alpha\in  [0,\tfrac12) \right\}-\  \#\left\{0\leq i<k: x\oplus i\alpha\in [\tfrac12,1)\right\}}\nonumber \\ \label{TheLastEquation}
   &=:\ \rho_k(x).
\end{align}
Define the function $h$ on $\XY$
\begin{align}\label{DefinitionOfD}
h(x,y)=\left\{
\begin{array}{cl}
\rho_k(x)&\mbox{ for }(x,y)\in\Gamma_k, k\in \N,\\
\infty&\mbox{ else}.
\end{array}\right.
\end{align}
By (\ref{TheRelevantConvergence}), we have, for each $k\in \N$,
    $\lim_n\|h-\phi_n\oplus\psi_n\|_{L^1(\pi_k)}=0$. Somewhat more precisely, one obtains
that
\begin{align}\label{ConcreteBound}
\|h-\phi_n\oplus\psi_n\|_{L^1(\pi_k)}\leq  k \|\tilde
c-\phi_n\oplus\psi_n\|_{L^1(\pi_0+\pi_1)}.
\end{align}

Now we shall  modify the cost function $\tilde c$ of Example
\ref{SecondAP} by defining it to be finite not only on $\Gamma_0
\cup\Gamma_1$, but rather on $\bigcup_{k\in\N} \Gamma_k$. We then
obtain the following situation.

\begin{example}\label{Example3.3}
Using \eqref{DefinitionOfD} define
$c:[0,1)\times[0,1)\to[0,\infty]$ by
$$c(x,y)=h(x,y)_+,$$
so that $\{c<\infty\}=\bigcup_{k\in\N} \Gamma_k$. For the resulting optimal transport problem we then find:
\begin{enumerate}
    \item[(i)] The primal value $P$ of the problem (\ref{G1}) equals zero
    and $\hat{\phi}=\hat{\psi}=0$ are (trivial) optimizers of the dual problem (\ref{SimpleJ}).
    \item[(ii)] For strictly positive scalars $(a_k)_{k\geq0}$, normalized by
$\sum_{k\geq0} a_k=1$ apply Theorem \ref{LeonardDuality} to the
transport plan $\pi:=\sum_{k\geq0} a_k\pi_k.$ (Again we apologize
for using the notation $\pi$ for the measure $\pi_0$ in Theorem
\ref{LeonardDuality}, as all the letters $\pi_k$ are already
taken.) If $(a_k)_{\geq0}$ tends sufficiently fast to zero, as
$|k|\to\infty,$ the following facts are verified.
\begin{enumerate}
    \item[-] The primal value is
    $$P^{(\pi)} =\inf\left\{\int_{\XY} c\,d\bar \pi :\bar{\pi} \in\Pi (\mu,\nu),\|\tfrac{d\bar\pi}{d\pi}\|_{L^\infty} <\infty\right\}=1.$$
    \item[-] The Borel function $h\in L^1(\pi)$ defined in
\eqref{DefinitionOfD} is a dual optimizer in the sense of Theorem
\ref{LeonardDuality}, i.e.
$$D^{(\pi)}=\int_{\XY} h\,d\pi=1.$$
    \item[-] There is a sequence $(\phi_n,\psi_n)^\infty_{n=1}$ in
$L^1(\mu)\times L^1(\nu)$ such that $(\phi_n\oplus
\psi_n)^\infty_{n=1}$ converges to $h$ in the norm of $L^1(\pi)$.
\end{enumerate}
\end{enumerate}
\end{example}

Before proving the above assertions let us draw one conclusion: in
(ii) we {\it can not assert} that the functions
$(\phi_n,\psi_n)^\infty_{n=1}$ satisfy -- in addition to the
properties above -- the inequality $\phi_n(x)+\psi_n(y) \le
c(x,y)$, for all $(x,y)\in \XY$. Indeed, if  this were possible
then, because of $\lim_{n\to\infty} (\int_X \phi_n \,d\mu+\int_Y
\psi_n \,d\nu)=D^{(\pi)}=1,$ we would have that the dual value $D$
of the original dual problem \eqref{SimpleJ} would equal $D=1$, in
contradiction to (i).

\begin{proof}[Proof of the assertions of Example \ref{Example3.3}]

We start with assertion (ii). Fix an optimizing sequence
$(\phi_n,\psi_n)_{n=1}^\infty$ in the context of Example
\ref{SecondAP} such that
\begin{align}
\label{ForConcrete}\|\tilde{c}-
\phi_n\oplus\psi_n\|_{L^1(\pi_0+\pi_1)}\leq {1}/{n^3}.
\end{align}
 Pick a sequence  $(a_k)_{k\in\N}$ of positive numbers such that
\begin{itemize}
\item[(a)] $a_k \|h\|_{L^1(\pi_k)}\leq C 2^{-k} $ for all $k\in
\N$, \item[(b)] $a_k (\|\phi_n\|_1+\|\psi_n\|_1)\leq C  2^{-k}$
for  all $k\in \N$ with $n\leq k,$
\end{itemize}
for    some real constant $C$.
After re-normalizing, if necessary, we may assume that $\sum_{k= 1
}^\infty a_k=1$. Set $\pi:= \sum_{k=1}^\infty a_k \pi_k$. From (a) we
obtain $h\in L^1(\pi)\subseteq L^1(\pi)^{**}$ thus $h$ is viable
for the problem $D^{(\pi)}_{**}$ and hence $D^{(\pi)}_{**}\geq 1$.
Clearly $P^{(\pi)}\leq1$, hence $P^{(\pi)}=D^{(\pi)}_{**}=1$ and
$h$ is a dual maximizer. Combining \eqref{ForConcrete} with
\eqref{ConcreteBound} we obtain
\begin{align*}
\|h-\phi_n\oplus\psi_n\|_{L^1(\pi_k)}\leq  k /n^3.
\end{align*}
Therefore
\begin{align*}
\|h-\phi_n\oplus\psi_n\|_{L^1(\pi)} &\leq   \sum_{k\leq
n}\|h-\phi_n\oplus\psi_n\|_{L^1(\pi_k)}
+ \sum_{k>n} a_k(\|h\|_{L^1(\pi_k)} + \|\phi_n\|_1+\|\psi_n\|_1)\\
&\leq  1/n +2C \sum_{k>n} 2^{-k}.
\end{align*}
Hence $\phi_n\oplus\psi_n$ converges to $h$ in $\|.\|_{L^1(\pi)}.$
 This shows
assertion (ii) above.

\medskip

To obtain (i) we  construct a transport plan $\pi_\beta
\in\Pi(\mu,\nu)$ such that $\int_{\XY} c\, d\pi_\beta=0$. Note in
passing that in view of (ii) we must have
$\|\tfrac{d\pi_\beta}{d\pi}\|_{L^\infty (\pi)}=\infty$ for the
$\pi$ constructed above. On the other hand, we must have
$\tfrac{d\pi_\beta}{d\pi} \in L^1 (\pi)$, if $a_k>0$ for all
$k\in\N$, as every finite cost transport plan must be absolutely
continuous with respect to $\pi$.

The idea is to concentrate $\pi_\beta $ on the set
\begin{align*}
\Gamma&:=\{(x,y):c(x,y)=0\}\\
&=\{(x,x\oplus k\alpha): k\geq 1, \textstyle{ \sum_{i=0}^{k-1}
(\1_{[0, \frac12)}(x\oplus i\alpha)-\1_{[\frac12, 1)}(x\oplus
i\alpha))\leq -1}\}.
\end{align*}
To prove that this can be done it is sufficient to show that whenever $A\subseteq X$, $B\subseteq Y,$ $\mu (A), \nu (B)>0$, a
subset $A'$ of $A$ can be transported to a subset $B'$ of $B$ with $\nu (B')=\mu(A')>0$ via $\Gamma$.  Then an exhaustion argument applies.

At this stage we encounter an interesting connection to the theory of measure preserving systems.
For $x\in X$ and $m\in \N$ set
\begin{align*}
S(x,m):= \Big(x\oplus \alpha, m+ \1_{[0,
\frac12)}(x)-\1_{[\frac12, 1)}(x)\Big).
\end{align*}
Then $S$ is a measure preserving transformation of the space
$([0,1]\times\Z, \lambda\times \#)$. (See \cite{Aaro97} for an
introduction to infinite ergodic theory and the basic definitions
in this field.) It is not hard to see that the ergodic theorem,
applied to the rotation by $\alpha$ on the torus, shows that $S$ is
non wandering. Much less trivial is the fact that $S$ is also
ergodic. This was shown by K.\ Schmidt \cite{Schm78} for a certain
class of irrational numbers $\alpha\in [0,1)$, and in full
generality by M.\ Keane and J.-P.\ Conze \cite{CoKe76}, see also
\cite{AaKe82}.
\\
The relevance of these facts to our situation is that for $k\geq 1$, the pair $(x, x\oplus k\alpha)$ is an element of $\Gamma$ if and only if
$S^k(x,0)\in [0,1)\times \{-1,-2,\ldots\}$. By ergodicity of $S$, there exists $k$ such that
\begin{align*}
(\lambda \times \#)\big((S^k[A\times \{0\}]) \cap (B\times \{-1,-2,\ldots\})\big)>0,
\end{align*}
 thus it is
possible to shift a positive portion of $A$ to $B$ as required. By exhaustion, there indeed exists a transport $\pi_\beta $ such that $\langle c,\pi_\beta\rangle=0$.
\end{proof}

The above example 
 illustrates some of the
subtleties of Theorem \ref{LeonardDuality}. However, it does not
yet provide evidence for the necessity of allowing for the
singular part $\hh^s$ of the optimizer $\hh$ in Theorem
\ref{LeonardDuality}. We have constructed yet a more refined --
and rather longish -- variant of the Ambrosio--Pratelli example
above, which shows that, in general, there is no way of avoiding
these complications in the statement of Theorem
\ref{LeonardDuality}. We refer to the accompanying paper
\cite[Section 3]{BLS09b} for a presentation of this example, where
it is shown that it can indeed occur that the singular part
$\hh^s$ in Theorem \ref{LeonardDuality} does not vanish.

\section{The Projective Limit Theorem}\label{sec-projlim}

We again consider the general setting where $c$ is a
$[0,\infty]$-valued Borel measurable function. To avoid
trivialities we shall always assume that $\Pi(\mu,\nu,c)$ is
non-empty.

Theorem \ref{LeonardDuality} only pertains to the situation of a
{\it fixed} element $\pi_0\in\Pi(\mu,\nu,c)$: one then optimizes
the transport problem of all ${\pi}\in\Pi(\mu,\nu)$ with
$\|\frac{d{\pi}}{d\pi_0}\|_{L^\infty (\pi_0)} <\infty$.

The purpose of this section is to find an optimizer $h$ which does
work simultaneously, {\it for all $\pi_0\in\Pi(\mu,\nu,c)$}. We
are not able to provide a result showing that a {\it function} $h$
-- plus possibly some singular part $h^s$ -- exists which fulfills
this duty, for all $\pi_0\in\Pi(\mu,\nu,c)$. We have to leave the
question whether this is always possible as an open problem. But
we can show that a projective limit $\hat{H} =(\hh_\pi
)_{\pi\in\Pi(\mu,\nu,c)}$ exists which does the job.

We introduce an order relation on $\Pi(\mu,\nu,c):$ we say that
$\pi_1 \preceq\pi_2$ if $\pi_1 \ll\pi_2$ and $\|
\frac{d\pi_1}{d\pi_2}\|_{L^\infty (\pi_2)} <\infty.$ For
$\pi_1\preceq\pi_2$ there is a natural, continuous projection
$P_{\pi_1 ,\pi_2}: L^1(\pi_2)\to L^1(\pi_1)$ associating to each
$h_{\pi_2}\in L^1(\pi_2)$, which is an equivalence class modulo
$\pi_2$-null functions, the equivalence class modulo $\pi_1$-null
functions which contains the equivalence class $h_{\pi_2}$ (and
where this inclusion of equivalence classes may be strict, in
general). We may define the locally convex vector space $E$ as the
projective limit
$$
E= \underset\longleftarrow\lim_{\pi\in\Pi (\mu,\nu,c)} \quad
L^1(\XY, \pi).
$$
The elements of $E$ are families
$H=(h_\pi)_{\pi\in\Pi(\mu,\nu,c)}$ such that, for $\pi_1\preceq
\pi_2$, we have $P_{\pi_1,\pi_2} (h_{\pi_2}) =h_{\pi_1}.$
\\
A net $(H^\alpha)_{\alpha\in I}\in E$ converges to $H\in E$ if,
$$
\lim_{\alpha\in I} \parallel h^\alpha_\pi -h_\pi \parallel_{L^1(\pi)} =0, \quad \mbox{for each} \ \pi\in\Pi(\mu,\nu,c).
$$
We may also define the projective limit
$$E_S =  \underset\longleftarrow\lim_{\pi\in\Pi (\mu,\nu,c)} L^1_S (\XY,\pi),$$
which is a closed subspace of $E$.

We start with an easy result.
\begin{proposition}\label{AggregatingCountablyManyPis}
Let $X$ and $Y$ be polish spaces equipped with Borel probability
measures $\mu,\nu$, and let $c:\XY\to [0,\infty]$ be Borel
measurable. Assume that $\Pi(\mu,\nu,c)$ is non-empty.

There is $\pi_0\in\Pi(\mu,\nu,c)$ such that
$$P^{(\pi_0)} = \inf\limits_{\pi\in\Pi (\mu,\nu,c)} P^{(\pi)}.$$
\end{proposition}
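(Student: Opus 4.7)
The plan is to take a minimizing sequence $(\pi_n)_{n=1}^\infty$ in $\Pi(\mu,\nu,c)$ with $P^{(\pi_n)}\to \inf_{\pi\in\Pi(\mu,\nu,c)} P^{(\pi)}$ and aggregate them into a single transport plan via an appropriately weighted convex combination. The key monotonicity observation is that whenever $\pi_1\preceq\pi_2$, every $\bar\pi\in\Pi^{(\pi_1)}(\mu,\nu)$ also lies in $\Pi^{(\pi_2)}(\mu,\nu)$ (since bounded Radon--Nikodym derivatives compose), hence $\pi\mapsto P^{(\pi)}$ is antitone with respect to $\preceq$. So any $\pi_0$ that dominates each $\pi_n$ in the sense of $\preceq$ will automatically satisfy $P^{(\pi_0)}\le \inf_n P^{(\pi_n)}$.

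First, choose $a_n>0$ with $\sum_{n\ge 1} a_n=1$ and $a_n\langle c,\pi_n\rangle\le 2^{-n}$; this is possible because each $\langle c,\pi_n\rangle<\infty$. Set
\[
\pi_0 := \sum_{n\ge 1} a_n\,\pi_n.
\]
Then $\pi_0$ is a Borel probability measure on $\XY$ with marginals $\mu$ and $\nu$ (convex combinations of transport plans are transport plans), and
\[
\langle c,\pi_0\rangle = \sum_{n\ge 1} a_n\,\langle c,\pi_n\rangle \le \sum_{n\ge 1} 2^{-n} =1<\infty,
\]
so $\pi_0\in\Pi(\mu,\nu,c)$.

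Next, for each fixed $n$ we have $\pi_n\ll\pi_0$ with $\frac{d\pi_n}{d\pi_0}\le a_n^{-1}$ $\pi_0$-a.s., so $\pi_n\preceq\pi_0$. By the monotonicity noted above, $\Pi^{(\pi_n)}(\mu,\nu)\subseteq\Pi^{(\pi_0)}(\mu,\nu)$ and therefore $P^{(\pi_0)}\le P^{(\pi_n)}$. Letting $n\to\infty$ gives
\[
P^{(\pi_0)}\le \lim_{n\to\infty}P^{(\pi_n)} = \inf_{\pi\in\Pi(\mu,\nu,c)} P^{(\pi)}.
\]
The reverse inequality is immediate from $\pi_0\in\Pi(\mu,\nu,c)$, so equality holds. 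There is no real obstacle here; the only point to watch is the choice of weights that makes the cost of $\pi_0$ finite while still guaranteeing $\pi_n\preceq\pi_0$ for every $n$, which the geometric weights $a_n\asymp 2^{-n}/\langle c,\pi_n\rangle$ achieve simultaneously.
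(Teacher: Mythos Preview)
Your argument is correct and follows the same idea as the paper: take a minimizing sequence $(\pi_n)$ and set $\pi_0$ equal to a countable convex combination, then use $\pi_n\preceq\pi_0$ to conclude $P^{(\pi_0)}\le P^{(\pi_n)}$. In fact you are slightly more careful than the paper, which simply takes weights $2^{-n}$ without explicitly arranging $\sum_n 2^{-n}\langle c,\pi_n\rangle<\infty$; your choice of weights $a_n$ with $a_n\langle c,\pi_n\rangle\le 2^{-n}$ makes the membership $\pi_0\in\Pi(\mu,\nu,c)$ transparent, and you also spell out the antitonicity $\pi\mapsto P^{(\pi)}$ under $\preceq$ that the paper leaves implicit.
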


\begin{proof}
Let $(\pi_n)^\infty_{n=1}$ be a sequence in $\Pi(\mu,\nu,c)$ such that
$$\lim_{n\to\infty} P^{(\pi_n)} = \inf_{\pi\in\Pi (\mu,\nu,c)} P^{(\pi)}.$$
It suffices to define $\pi_0$ as
$$\pi_0 =\sum^\infty_{n=1} 2^{-n} \ \pi_n$$
as we then have $\pi_n\preceq\pi_0$, for each $n\in\mathbb{N}.$
\end{proof}

Of course, if the primal problem \eqref{G1} is attained, we have
$P^{(\pi_0)} =P.$

The above proposition allows us to suppose w.l.o.g.\ in our
considerations on the projective limit $E$ that the $\pi$
appearing in the definition are all bigger than $\pi_0$:
$$E=  \underset\longleftarrow\lim_{\pi\in\Pi (\mu,\nu,c)} L^1(\pi) =
 \underset\longleftarrow\lim_{\pi\in\Pi (\mu,\nu,c),\pi\succeq\pi_0}
L^1(\pi).$$ Clearly, we then have that the optimal transport cost
$P^{(\pi)}$ is equal to $P^{(\pi_0)}$, for all $\pi\succeq\pi_0$.

\begin{theorem}\label{GeneralDualityLimit}
Let $X$ and $Y$ be polish spaces equipped with Borel probability
measures $\mu,\nu$, and let $c:\XY\to [0,\infty]$ be Borel
measurable. Assume that $\Pi(\mu,\nu,c)$ is non-empty. Let $\pi_0$
be as in Proposition \ref{AggregatingCountablyManyPis}

There is an element
$\hat{H}=(\hh_\pi)_{\pi\in\Pi(\mu,\nu,c),\pi\succeq\pi_0} \in E$
such that, for each $\pi\in\Pi(\mu,\nu,c),\pi\succeq\pi_0,$ the
element $\hh_\pi\in L^1_S(\pi)^{**}$ satisfies $\hh_\pi\le c$ in
the order of $L^1(\pi)^{**}$ and $\hh_\pi$ is an optimizer of the
dual problem  \eqref{Walters8}
$$
\langle \hh_\pi,\pi\rangle
 =D_{**}^{(\pi)} := \sup \{ \langle h,\pi \rangle : h\in L_S^1 (\pi)^{**}, \, h\leq c\}.
$$
We then have that, for each $\pi\in\Pi(\mu,\nu,c),
\pi\succeq\pi_0,$ the decomposition $\hh_\pi = \hh^r_\pi
+\hh^s_\pi$ of $\hh_\pi$ into its regular and singular parts
verifies
\begin{itemize}
    \item[-] $\hh^r_\pi\in L^1_S(\pi)$ and $\hh^r_\pi\le c$ in
    $L^1(\pi)$;
    \item[-]  $\hh^s_\pi\in L^1_S(\pi)^{**}$ and
        $\hh^s_\pi\le 0$ in the space of purely finitely additive measures
which are absolutely continuous with respect to $\pi.$
\end{itemize}

Moreover, for each $\pi\in\Pi(\mu,\nu,c), \pi\succeq\pi_0,$ there
is no duality gap in the sense that
\begin{equation}\label{eq-06}
    D_{**}^{(\pi)}=D^{(\pi)}=P^{(\pi)}=P^{(\pi_0)}
\end{equation}
where $
    D^{(\pi)}:=\lim\limits_{\eps\to 0}\sup\Big\{\int \phi\,d\mu+\int\psi\,d\nu: \phi\in L^1(\mu),\psi\in L^1(\nu),
     \int (\fop -c)_+\,d\pi\le\eps
        \Big\}
$ and $P^{(\pi)}:= \inf \{ \langle c,\pi'\rangle :
\pi'\in\Pi^{(\pi)}(\mu,\nu)\}.$ If in addition the primal problem
\eqref{G1} is attained, for instance if $c$ is lower
semicontinuous, then
    $
    D_{**}^{(\pi)}=D^{(\pi)}=P^{(\pi)}=P.
    $
\end{theorem}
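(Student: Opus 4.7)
The plan is to apply Theorem \ref{LeonardDuality} separately to each admissible $\pi$ and then glue the resulting dual optimizers together by a projective limit argument. First I would invoke Proposition \ref{AggregatingCountablyManyPis} to fix $\pi_0$ realizing the infimum of $P^{(\pi)}$ over $\Pi(\mu,\nu,c)$. For any $\pi\succeq\pi_0$ one has $\Pi^{(\pi_0)}(\mu,\nu)\subseteq\Pi^{(\pi)}(\mu,\nu)$, hence $P^{(\pi)}\leq P^{(\pi_0)}$, and combined with the minimality of $\pi_0$ this forces $P^{(\pi)}=P^{(\pi_0)}$. For each such $\pi$, Theorem \ref{LeonardDuality} applied with $\pi$ in the role of $\pi_0$ produces the non-empty convex set
\[
K_\pi:=\bigl\{\hh\in L^1_S(\pi)^{**}:\ \hh\leq c\text{ in }L^1(\pi)^{**},\ \langle \hh,\pi\rangle=P^{(\pi_0)}\bigr\}
\]
of dual optimizers, which is weak-$*$ closed; the decomposition and regularity properties of the elements of $K_\pi$ asserted in Theorem \ref{GeneralDualityLimit} are read off directly from Theorem \ref{LeonardDuality}.

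Next I would transpose the canonical projections $P_{\pi_1,\pi_2}\colon L^1(\pi_2)\to L^1(\pi_1)$ to the biduals. A direct calculation gives the adjoint $P^*_{\pi_1,\pi_2}(\phi)=\phi\,\tfrac{d\pi_1}{d\pi_2}\in L^\infty(\pi_2)$ for $\phi\in L^\infty(\pi_1)$, so the bidual $(P_{\pi_1,\pi_2})^{**}\colon L^1(\pi_2)^{**}\to L^1(\pi_1)^{**}$ is weak-$*$ continuous and preserves the inequality $\hh\leq c$ (test against $\phi\in L^\infty_+(\pi_1)$ and use $\phi\tfrac{d\pi_1}{d\pi_2}\in L^\infty_+(\pi_2)$). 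The crucial observation is that identity \eqref{AttainmentPart} in the proof of Theorem \ref{LeonardDuality} yields $\langle \hh_{\pi_2},\pi_1\rangle_{\pi_2}=P^{(\pi_2)}=P^{(\pi_0)}$ for \emph{every} $\pi_1\in\Pi^{(\pi_2)}(\mu,\nu)$, not merely for primal optimizers; specialising to $\pi_1\preceq\pi_2$ and noting $P^*_{\pi_1,\pi_2}(\mathbf{1})=\tfrac{d\pi_1}{d\pi_2}$, one gets $\langle(P_{\pi_1,\pi_2})^{**}(\hh_{\pi_2}),\pi_1\rangle_{\pi_1}=P^{(\pi_0)}$. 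Hence $(P_{\pi_1,\pi_2})^{**}(K_{\pi_2})\subseteq K_{\pi_1}$; the subspaces $L^1_S$ are respected since $S(L^1(\mu)\times L^1(\nu))$ is the same set of functions in each $L^1(\pi)$, and coherency $(P_{\pi_1,\pi_2})^{**}\circ(P_{\pi_2,\pi_3})^{**}=(P_{\pi_1,\pi_3})^{**}$ is automatic.

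The main obstacle is securing weak-$*$ compactness of each $K_\pi$, which is essential for the projective limit to be non-empty. For $\hh=\hh^r+\hh^s\in K_\pi$ with $\hh^r\leq c$ in $L^1(\pi)$ and $\hh^s\leq 0$ purely singular, the relation $\int\hh^r\,d\pi+\langle \hh^s,\mathbf{1}\rangle=P^{(\pi_0)}$ gives $\|\hh^s\|=-\langle \hh^s,\mathbf{1}\rangle=\int\hh^r\,d\pi-P^{(\pi_0)}\leq\int c\,d\pi$ and $\|\hh^r_-\|_{L^1(\pi)}=\int\hh^r_+\,d\pi-\int\hh^r\,d\pi\leq\int c\,d\pi$, so $K_\pi$ sits in a norm-bounded, hence weak-$*$ compact, subset of $L^1(\pi)^{**}$. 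As $\Pi(\mu,\nu,c)$ is directed by $\preceq$ (use $\tfrac12(\pi_1+\pi_2)$), the classical theorem that the projective limit of non-empty compact Hausdorff spaces along continuous coherent maps is non-empty delivers an $\hat H=(\hh_\pi)_{\pi\succeq\pi_0}\in\varprojlim K_\pi$, which is the required element of $E$.

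Finally, the identities \eqref{eq-06} follow at once: $D_{**}^{(\pi)}=D^{(\pi)}=P^{(\pi)}$ come from \eqref{NiceEq} and \eqref{eq-05} applied with $\pi$ in the role of $\pi_0$, while $P^{(\pi)}=P^{(\pi_0)}$ was established above. If \eqref{G1} is attained by some $\ppi$, then since $\ppi\in\Pi^{(\ppi)}(\mu,\nu)$ with constant density one we get $P^{(\ppi)}\leq\int c\,d\ppi=P$; combined with $P\leq P^{(\pi_0)}$ (from $\Pi^{(\pi_0)}(\mu,\nu)\subseteq\Pi(\mu,\nu)$) and minimality $P^{(\pi_0)}\leq P^{(\ppi)}$, the chain $P^{(\pi_0)}\leq P^{(\ppi)}\leq P\leq P^{(\pi_0)}$ collapses and the common value is $P$, yielding $D_{**}^{(\pi)}=D^{(\pi)}=P^{(\pi)}=P$ as claimed.
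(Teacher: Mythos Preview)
Your argument is correct and follows essentially the same route as the paper: apply Theorem~\ref{LeonardDuality} at each $\pi\succeq\pi_0$ to obtain the non-empty, weak-$*$ compact convex sets $K_\pi$ of dual optimizers, verify that the (bidual) projections map $K_{\pi'}$ into $K_\pi$ for $\pi\preceq\pi'$, and then use a Tychonoff/projective-limit argument to produce a coherent family $(\hh_\pi)_\pi$. Your write-up is in fact more explicit than the paper's in several places---the norm bound on $K_\pi$, the computation of the adjoint $P_{\pi_1,\pi_2}^*$, and the reason why $\langle\hh,\pi_1\rangle=P^{(\pi_0)}$ for \emph{every} $\hh\in K_{\pi_2}$ and every $\pi_1\in\Pi^{(\pi_2)}(\mu,\nu)$ (namely that all such $\pi_1$ have the same image $\mathbf 1$ in $L^1_S(\pi_2)^*$)---whereas the paper phrases the final step via the intermediate sets $K_{\pi,\infty}=\bigcap_{\pi'\succeq\pi}P_{\pi,\pi'}(K_{\pi'})$ before invoking Tychonoff; the two formulations are equivalent.
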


 \begin{proof}
Fix $\pi\in\Pi(\mu,\nu,c), \pi \succeq \pi_0.$ We have seen in Theorem \ref{LeonardDuality} that the set
$$
    K_\pi =\{h \in L_S^1(\pi)^{**} : h\le c, \langle h,\pi \rangle = \langle c,\pi\rangle\}
$$
is non-empty. In addition $K_\pi$ is closed and bounded in $L^1
(\pi)^{**}$ and hence compact with respect to the $\sigma (L^1_S
(\pi)^{**} ,L^1_S (\pi)^* )$-topology.

For $\pi,\pi ' \in \Pi(\mu,\nu,c)$ with $\pi\preceq\pi'$ the set
$$
K_{\pi,\pi'}= P_{\pi,\pi'}(K_{\pi'})
$$
is contained in $K_\pi$ and
still a non-empty $\sigma^*$-compact convex subset of
$L^1(\pi)^{**}$.    
By compactness the following set is $\sigma^*$-compact and non-empty
too:
$$
    K_{\pi,\infty} = \bigcap\limits_{\pi'\succeq\pi} K_{\pi,\pi'}.
$$
We have $K_{\pi,\infty}=P_{\pi,\pi'}(K_{\pi',\infty})$ for $\pi\preceq\pi'$. Hence by Tychonoff's theorem the projective
limit
$$
    \underset\longleftarrow\lim_{\pi\in\Pi (\mu,\nu,c), \pi\succeq\pi_0} K_{\pi,\infty}
$$
of the compact sets $(K_{\pi,\infty})_{\pi\succeq \pi_0}$ is
non-empty, which is precisely the main assertion of the present
theorem.
\\
Finally, \eqref{eq-06} is a restatement of \eqref{eq-05} and when
the primal problem \eqref{G1} is attained, the last series of
equalities follows from $P^{(\pi_0)} =P$.
\end{proof}

Clearly
    $\Prel\le P\le P^{(\pi_0)},$ hence with Theorem \ref{res-01} and \eqref{eq-06} one sees that
    $$D=\Prel\le P\le P^{(\pi_0)}=P^{(\pi)}=D_{**}^{(\pi)}=D^{(\pi)}$$
for every $\pi\in\Pi(\mu,\nu,c)$ such that $\pi\succeq\pi_0.$


\end{document}